\documentclass[11pt,a4paper]{article}

\usepackage[T1]{fontenc}
\usepackage[utf8]{inputenc}
\usepackage{lmodern}
\usepackage[a4paper,margin=2.7cm]{geometry}
\usepackage{amsmath,amssymb,amsthm,mathtools}
\usepackage{booktabs,array,microtype}
\usepackage{caption,needspace,placeins,fancyhdr}
\AddToHook{cmd/section/before}{\FloatBarrier\Needspace{7\baselineskip}}
\AddToHook{env/theorem/before}{\Needspace{9\baselineskip}}
\AddToHook{env/lemma/before}{\Needspace{7\baselineskip}}
\AddToHook{env/proposition/before}{\Needspace{9\baselineskip}}
\usepackage{xcolor}
\usepackage[colorlinks=true,linkcolor=blue!55!black,citecolor=blue!55!black,
urlcolor=blue!55!black]{hyperref}
\usepackage[nameinlink,capitalize,noabbrev]{cleveref}

\allowdisplaybreaks[2]
\numberwithin{equation}{section}

\newtheorem{theorem}{Theorem}[section]
\newtheorem{lemma}[theorem]{Lemma}
\newtheorem{proposition}[theorem]{Proposition}
\newtheorem{corollary}[theorem]{Corollary}
\theoremstyle{remark}
\newtheorem{remark}[theorem]{Remark}

\crefname{theorem}{Theorem}{Theorems}
\Crefname{theorem}{Theorem}{Theorems}
\crefname{lemma}{Lemma}{Lemmas}
\Crefname{lemma}{Lemma}{Lemmas}
\crefname{proposition}{Proposition}{Propositions}
\Crefname{proposition}{Proposition}{Propositions}
\crefname{corollary}{Corollary}{Corollaries}
\Crefname{corollary}{Corollary}{Corollaries}
\crefname{remark}{Remark}{Remarks}
\Crefname{remark}{Remark}{Remarks}

\newcommand{\R}{\mathbb R}
\newcommand{\N}{\mathbb N}
\newcommand{\abs}[1]{\lvert#1\rvert}
\newcommand{\norm}[1]{\lVert#1\rVert}
\newcommand{\ip}[2]{\left\langle #1,#2\right\rangle}
\newcommand{\one}{\mathbf 1}
\newcommand{\ind}{\mathbf 1}
\newcommand{\dd}{\,\mathrm d}
\newcommand{\wh}{\widehat}
\DeclareMathOperator{\diver}{div}
\newcommand{\revise}[1]{{\color{black}#1}}  

\title{Toward P\'{o}lya's Conjecture: Improving the Individual Li--Yau Bound via Energy Orthogonality}
\author{Yifan Wang\footnote{School of Statistics and Mathematics, 
Central University of Finance and Economics, Beijing 102206, China (wangyifan@lsec.cc.ac.cn)}\ \ \ and \ \ 
Hehu Xie\footnote{SKLMS, NCMIS, Institute of Computational Mathematics,
Academy of Mathematics and Systems Science, Chinese Academy of Sciences,
No.~55, Zhongguancun Donglu, Beijing 100190, China, and School of
Mathematical Sciences, University of Chinese Academy of Sciences,
Beijing 100049, China (hhxie@lsec.cc.ac.cn).}}

\date{}

\begin{document}
\maketitle

\begin{abstract}
We establish two complementary lower-bound mechanisms for individual
eigenvalues of the Dirichlet Laplacian. First, energy orthogonality yields
a frequency-dependent cap on the Fourier density of a finite spectral
projection. Combining this cap with the standard $L^2$ Bessel estimate and
a radial-capacity bathtub principle gives, on every open set of finite
positive measure in $\R^n$ with $n\geq2$,
\[
 \lambda_k\geq c_n(2\pi)^2\omega_n^{-2/n}|\Omega|^{-2/n}k^{2/n},
 \qquad \frac{n}{n+2}<c_n<1.
\]
The constant $c_n$ is characterized by a scalar equation, with
$c_2=0.5383068077\ldots$. This estimate preserves Weyl scaling and
strictly improves the individual consequence of the Li--Yau sum
inequality, although it does not improve the sharp leading coefficient
in that sum inequality. Second, we retain part of the spectral deficit
discarded when an eigenvalue sum is bounded by its largest term. A lower
bound for the counting function, integrated through the exact first
Riesz-mean identity, leads to a strictly monotone scalar equation. Its
unique positive root is no weaker than the volume-only bound, and we give
necessary and sufficient criteria for strict improvement over both that
baseline and any independent lower bound. Quantitative estimates of
Jiang--Lin provide an explicit implementation on bounded Lipschitz domains.
The final comparisons and numerical example distinguish improvements
within this framework from stronger estimates available under additional
geometric or spectral assumptions.

\medskip
\noindent\textbf{Keywords:} Dirichlet eigenvalues; Li--Yau inequality;
Fourier envelope; energy orthogonality; bathtub principle;
spectral counting function.

\medskip
\noindent\textbf{2020 Mathematics Subject Classification}. Primary 35P15; Secondary 35P20, 42B37.
\end{abstract}

\section{Introduction}\label{sec:introduction}

Let $\Omega\subset\R^n$ be an open set with finite positive measure
$V=|\Omega|$, where $n\geq2$. We write
$\omega_n=|B_1(0)|$ and
\begin{equation}\label{eq:weyl-constant}
 C_n=(2\pi)^2\omega_n^{-2/n}.
\end{equation}
The eigenvalues of the Dirichlet Laplacian, listed in nondecreasing
order and repeated according to multiplicity, satisfy
\[
0<\lambda_1\leq\lambda_2\leq\cdots\nearrow\infty.
\]
Weyl's law gives the asymptotic formula
\[
\lambda_k\sim C_nV^{-2/n}k^{2/n}
\qquad\text{as }k\to\infty.
\]
P\'olya's conjecture~\cite{Polya1961} asks whether the Weyl term is a
termwise lower bound,
\begin{equation}\label{eq:polya}
 \lambda_k\geq C_nV^{-2/n}k^{2/n},
\end{equation}
for every $k$. The conjecture remains open for general domains. The
classical Li--Yau inequality~\cite{LiYau1983} gives
\begin{equation}\label{eq:li-yau-sum}
 \sum_{j=1}^k\lambda_j
 \geq \frac{n}{n+2}C_nV^{-2/n}k^{1+2/n},
\end{equation}
and the monotonicity of the spectrum yields the individual consequence
\begin{equation}\label{eq:li-yau-individual}
 \lambda_k\geq \frac{n}{n+2}C_nV^{-2/n}k^{2/n}.
\end{equation}
Our starting point is to revisit the individual bound
\cref{eq:li-yau-individual} derived from the Li--Yau sum
inequality \cref{eq:li-yau-sum}. Since P\'olya's conjecture
asserts a lower bound for every eigenvalue, improving this
individual estimate is a natural step toward the conjecture.
Although the leading coefficient in the sum inequality is
sharp, we show that energy orthogonality yields a strictly
larger coefficient in the individual bound.

\revise{The first contribution is to strengthen the Fourier constraint
in the Li--Yau argument by retaining information from energy
orthogonality.}
The Dirichlet condition makes the gradients
of the eigenfunctions orthogonal after energy normalization. Expressing a
plane wave as the divergence of an explicit vector field then supplements
the constant Fourier cap in the Li--Yau argument by a second cap that
decays as $|\xi|^{-2}$. A bathtub principle for this radial capacity gives
\[
 \lambda_k\geq c_nC_nV^{-2/n}k^{2/n},
 \qquad \frac{n}{n+2}<c_n<1.
\]
The coefficient is explicit through a scalar equation and applies at every
index, without boundary regularity assumptions. The argument is related
to the constant-cap Fourier method underlying the Li--Yau inequality and
to Berezin's Riesz-mean inequality~\cite{Berezin1972}, but it imposes the
additional energy cap before the moment problem is closed.

The second contribution addresses a different loss. The elementary
closure
\[
 \sum_{j=1}^k\lambda_j\leq k\lambda_k
\]
discards the \revise{nonnegative deficit}
$\sum_{j<k}(\lambda_k-\lambda_j)$. This deficit is exactly the first
Riesz mean at $\lambda_k$, and hence the integral of the strict counting
function. Any valid lower bound for that counting function can therefore
be inserted into the same scalar moment problem. After truncation at
$k-1$, the resulting residual is continuous and strictly decreasing, so
its unique positive root is a rigorous lower bound for $\lambda_k$.
We prove precise sign criteria for strict improvement over both the
volume-only baseline and any independent lower bound.

This hybrid construction is abstract; it requires only a measurable
counting lower bound with the correct direction. On bounded Lipschitz
domains, we implement it using the continuous-energy estimates of
Jiang--Lin~\cite{JiangLin2026}, which are obtained from cube counting,
Whitney decompositions, and boundary-layer control. Their estimate is
integrated here to recover part of the Riesz-mean deficit. The resulting
gain depends on the geometry and may vanish at low energies. Accordingly,
we make no claim of a uniformly larger volume-only coefficient or of
automatic dominance over the stronger special-domain estimates in their
work.

The framework complements several established refinements of Li--Yau and
Berezin inequalities. Positive remainders involving inertia, boundary
layers, or domain width appear in the work of Melas~\cite{Melas2003},
Ilyin~\cite{Ilyin2010}, Kova\v{r}\'ik--Vugalter--Weidl~\cite{KVW2009},
and Geisinger--Laptev--Weidl~\cite{GLW2011}. Steinerberger
\cite{Steinerberger2024} obtains spectral-gap and two-point information;
Frank--Larson--Pfeiffer~\cite{FLP2025} derive quantitative
uncertainty-based improvements of semiclassical Riesz means; and
Gan--Jiang--Lin~\cite{GJL2025} prove refinements with consequences for
infinitely many indices satisfying P\'olya's inequality in higher
dimensions. These results retain information different from the radial
capacity considered here, so meaningful comparisons must be made for the
same spectral quantity and under compatible hypotheses.

The paper is organized as follows. \Cref{sec:preliminaries} records the
spectral identities and proves the radial-capacity bathtub principle.
\Cref{sec:envelope} derives the Fourier envelope from energy orthogonality.
\Cref{sec:main-results} treats the cases $n\geq3$ and $n=2$ and computes
the universal coefficient. \Cref{sec:hybrid} develops the counting-function
closure, proves its well-posedness and comparison criteria, and gives the
Lipschitz-domain input. \Cref{sec:comparison} compares the conclusions with
representative geometric, averaged, high-frequency, and special-domain
bounds. \Cref{sec:questions} summarizes the scope and possible extensions,
and Appendix~\ref{sec:disk-example} provides a reproducible numerical
illustration.

\section{Preliminaries}\label{sec:preliminaries}

We begin by recording the spectral and Fourier identities on which the
argument rests. Let $\{\phi_j\}_{j\geq1}$ be a real-valued orthonormal basis of
Dirichlet eigenfunctions in $L^2(\Omega)$. We use complex Hilbert spaces
when pairing these functions with plane waves, with inner product linear
in its first argument. Thus
\begin{equation}\label{eq:eigenproblem}
-\Delta\phi_j=\lambda_j\phi_j,
\qquad \phi_j\in H_0^1(\Omega),
\qquad \int_\Omega\phi_i\phi_j\dd x=\delta_{ij}.
\end{equation}
We use the unitary Fourier transform
\begin{equation}\label{eq:fourier}
\wh u(\xi)=(2\pi)^{-\frac{n}{2}}
\int_{\R^n}u(x)e^{-\mathrm{i}x\cdot\xi}\dd x,
\end{equation}
where every function in $H_0^1(\Omega)$ is extended by zero to $\R^n$. The
Fourier density associated with the first $k$ eigenfunctions is
\begin{equation}\label{eq:density}
F_k(\xi)=\sum_{j=1}^k\abs{\wh\phi_j(\xi)}^2.
\end{equation}
Plancherel's theorem and the eigenvalue equation then give the two basic
identities
\begin{equation}\label{eq:mass-moment}
\int_{\R^n}F_k(\xi)\dd\xi=k,
\qquad
\int_{\R^n}\abs\xi^2F_k(\xi)\dd\xi
=\sum_{j=1}^k\lambda_j.
\end{equation}
Thus the spectral problem has been converted into a constrained moment problem:
the total mass of $F_k$ is known exactly, its second moment is the eigenvalue
sum, and the remaining task is to obtain the sharpest available pointwise
majorant.

Before deriving that majorant, we formulate the rearrangement principle that
will turn it into a lower bound for the second moment. The following statement
is a radial-obstacle version of the standard bathtub principle; see
Lieb and Loss~\cite{LiebLoss2001}.

\begin{lemma}[Bathtub principle under a radial envelope]\label[lemma]{lem:bathtub}
Let $h:[0,\infty)\to[0,\infty)$ be measurable and nonincreasing, and suppose
that $h(\abs\cdot)\in L^1_{\mathrm{loc}}(\R^n)$. Let $0\leq M<\infty$ and
assume that
\[
\int_{\R^n}h(\abs\xi)\dd\xi\geq M.
\]
If a measurable function $f:\R^n\to[0,\infty)$ satisfies
\[
0\leq f(\xi)\leq h(\abs\xi)\quad\text{a.e.},
\qquad
\int_{\R^n}f(\xi)\dd\xi=M,
\]
then there exists $R\in[0,\infty]$ such that
\[
\int_{B_R}h(\abs\xi)\dd\xi=M.
\]
Consequently, if
\[
g_R(\xi):=h(\abs\xi)\ind_{B_R}(\xi),
\]
then
\begin{equation}\label{eq:bathtub}
\int_{\R^n}\abs\xi^2f(\xi)\dd\xi
\geq
\int_{B_R}\abs\xi^2h(\abs\xi)\dd\xi.
\end{equation}
The integrals in \cref{eq:bathtub} are allowed to take the value $+\infty$.
\end{lemma}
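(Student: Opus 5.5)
Two things have to be shown: that a radius $R$ with $\int_{B_R}h=M$ exists, and the moment inequality \cref{eq:bathtub}.

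\emph{Existence of $R$.} Set $\Phi(r)=\int_{B_r}h(\abs\xi)\dd\xi$ for $r\in[0,\infty)$ and $\Phi(\infty)=\int_{\R^n}h(\abs\xi)\dd\xi$. Local integrability makes $\Phi$ finite for finite $r$. It is nondecreasing, and $\Phi(0)=0$ because spheres have measure zero. For continuity, note that $\Phi(r')-\Phi(r)=\int_{r<\abs\xi<r'}h\dd\xi\to0$ as $r'\downarrow r$ or $r\uparrow r'$. This follows from dominated convergence, since $h(\abs\cdot)$ is integrable on bounded balls and the sphere $\{\abs\xi=r\}$ is null. Monotone convergence gives $\Phi(r)\to\Phi(\infty)$ as $r\to\infty$.

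Now split into cases. If $M<\Phi(\infty)$, or if $M=\Phi(\infty)$ and this value is attained at some finite radius, the intermediate value theorem on $[0,\infty)$ gives a finite $R$. Otherwise $M=\Phi(\infty)$ and we take $R=\infty$. The hypothesis $\Phi(\infty)\ge M$ is used exactly here. The case $M=0$ gives $R=0$ (or any $R$ with $\Phi(R)=0$) and is trivial.

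\emph{The inequality.} Let $g=g_R$. I would use the pointwise inequality
\[
(\abs\xi^2-R^2)\bigl(f(\xi)-g(\xi)\bigr)\ge0 \quad\text{a.e.},
\]
which holds for finite $R$. Inside $B_R$ we have $f\le h=g$ and $\abs\xi^2\le R^2$, so both factors are $\le0$. Outside $B_R$ we have $g=0\le f$ and $\abs\xi^2\ge R^2$, so both factors are $\ge0$.

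This handles the case where $\int\abs\xi^2 f=\infty$: then \cref{eq:bathtub} holds trivially. If instead $\int\abs\xi^2f<\infty$, note that $\int_{B_R}\abs\xi^2 g\le R^2M<\infty$, so every integral is finite. Integrating the pointwise inequality and using $\int f=\int g=M<\infty$ gives
\[
\int\abs\xi^2f-\int\abs\xi^2g\ \ge\ R^2\Bigl(\int f-\int g\Bigr)=0.
\]
Separating the terms is legitimate because each is finite.

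\emph{The case $R=\infty$.} Here $g=h$ on $\R^n$ and $\int h=M=\int f$ with $f\le h$. Hence $h-f\ge0$ has integral zero, so $f=h$ a.e., and the two sides of \cref{eq:bathtub} coincide, possibly both $+\infty$.

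The main obstacle is bookkeeping rather than ideas. One has to treat infinite values correctly, and avoid subtracting $\infty-\infty$ by first disposing of the case $\int\abs\xi^2f=\infty$. One also has to make sure continuity of $\Phi$ is justified without assuming $h$ is continuous. The nonincreasing hypothesis on $h$ is not actually needed for this argument. I would note this, but still state the result as given, since the hypothesis ensures the extremal $g_R$ is the natural radial rearrangement.
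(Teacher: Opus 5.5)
Your proof is correct and follows the paper's argument essentially step for step. You build the filling radius from the continuous cumulative mass, justifying continuity by dominated convergence where the paper uses polar coordinates. You use the same pointwise sign relation $(\abs\xi^2-R^2)(f-g_R)\geq0$, disposing of the infinite-moment case first, and the same $f=h$ a.e.\ argument when $R=\infty$. Your remark that the monotonicity of $h$ is never used is also accurate for the paper's own proof.
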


\begin{proof}
The proof consists of three steps.

\medskip
\noindent\emph{Step 1: determination of the filling radius.}
Define the cumulative mass
\[
H(r):=\int_{B_r}h(\abs\xi)\dd\xi,
\qquad r\in[0,\infty).
\]
Since $h(\abs\cdot)\in L^1_{\mathrm{loc}}(\R^n)$, one has $H(r)<\infty$
for every finite $r$. Polar coordinates give
\begin{equation}\label{eq:H-polar}
H(r)=n\omega_n\int_0^r h(s)s^{n-1}\dd s.
\end{equation}
It follows that $H$ is absolutely continuous on every finite interval; in
particular, it is continuous and nondecreasing. Moreover, the monotone
convergence theorem yields
\[
\lim_{r\to\infty}H(r)
=\int_{\R^n}h(\abs\xi)\dd\xi\geq M.
\]
If $M=0$, we take $R=0$. Suppose henceforth that $M>0$. If $H(R)=M$ for some
finite $R$, we choose such an $R$. Otherwise, continuity forces
\[
H(r)<M\quad\text{for every finite }r,
\qquad
\lim_{r\to\infty}H(r)=M,
\]
and we set $R=\infty$. In either case,
\begin{equation}\label{eq:equal-mass}
\int_{\R^n}g_R(\xi)\dd\xi
=\int_{B_R}h(\abs\xi)\dd\xi=M
=\int_{\R^n}f(\xi)\dd\xi.
\end{equation}

\medskip
\noindent\emph{Step 2: comparison when $R<\infty$.}
Inside $B_R$, the bound $f\leq h$ implies
\[
f(\xi)-g_R(\xi)=f(\xi)-h(\abs\xi)\leq0,
\qquad
\abs\xi^2-R^2\leq0.
\]
Outside $B_R$, one has $g_R=0$, and hence
\[
f(\xi)-g_R(\xi)=f(\xi)\geq0,
\qquad
\abs\xi^2-R^2\geq0.
\]
Combining the two regions gives the pointwise sign relation
\begin{equation}\label{eq:pointwise-sign}
\bigl(\abs\xi^2-R^2\bigr)
\bigl(f(\xi)-g_R(\xi)\bigr)\geq0
\quad\text{a.e. in }\R^n.
\end{equation}
Furthermore, local integrability of the envelope implies
\[
\int_{\R^n}\abs\xi^2g_R(\xi)\dd\xi
\leq R^2\int_{B_R}h(\abs\xi)\dd\xi=R^2M<\infty.
\]
If the second moment of $f$ is infinite, the desired conclusion is immediate.
Otherwise, both second moments are finite, and integration of
\cref{eq:pointwise-sign}, together with the equal-mass identity
\cref{eq:equal-mass}, yields
\begin{align*}
0
&\leq\int_{\R^n}
\bigl(\abs\xi^2-R^2\bigr)
\bigl(f(\xi)-g_R(\xi)\bigr)\dd\xi\\
&=\int_{\R^n}\abs\xi^2
\bigl(f(\xi)-g_R(\xi)\bigr)\dd\xi
-R^2\int_{\R^n}\bigl(f(\xi)-g_R(\xi)\bigr)\dd\xi\\
&=\int_{\R^n}\abs\xi^2f(\xi)\dd\xi
-\int_{\R^n}\abs\xi^2g_R(\xi)\dd\xi.
\end{align*}
This is precisely \cref{eq:bathtub}.

\medskip
\noindent\emph{Step 3: the case $R=\infty$.}
In this case $g_R(\xi)=h(\abs\xi)$, while \cref{eq:equal-mass} gives
\[
0\leq h(\abs\xi)-f(\xi),
\qquad
\int_{\R^n}\bigl(h(\abs\xi)-f(\xi)\bigr)\dd\xi=0.
\]
It follows that $f(\xi)=h(\abs\xi)$ almost everywhere. Hence
\cref{eq:bathtub} holds with equality, completing the proof.
\end{proof}

\begin{remark}[Partial filling on the boundary sphere]\label{rem:sphere}
For Lebesgue measure, the sphere $\partial B_R$ has zero $n$-dimensional
measure. Changing the value of $f$ on the sphere therefore neither changes nor
adjusts its mass.
\revise{
If the prescribed mass is strictly smaller than the total
capacity, continuity of $H$ guarantees a finite filling radius
with $H(R)=M$, without any boundary filling. If the prescribed
mass equals the total capacity, the filling radius may be
infinite, as allowed in Lemma~\ref{lem:bathtub}.
In more general bathtub problems, partial filling may be needed
when a level set of the cost function has positive measure
with respect to the capacity measure.
}

\end{remark}

\begin{remark}[Equality]\label{rem:equality}
If $R<\infty$, equality in \cref{eq:bathtub} requires
\[
\bigl(\abs\xi^2-R^2\bigr)
\bigl(f(\xi)-g_R(\xi)\bigr)=0
\quad\text{a.e.}
\]
Since $\partial B_R$ has measure zero, this forces
\[
f(\xi)=h(\abs\xi)\quad\text{a.e. in }B_R,
\qquad
f(\xi)=0\quad\text{a.e. in }\R^n\setminus B_R.
\]
Thus, among all densities of mass $M$ lying below the prescribed capacity
$h(\abs\xi)$, the second moment is minimized by filling the available capacity
from the frequency origin outward.
\end{remark}

The lemma isolates the variational part of the argument. What remains is to
identify a useful radial envelope for $F_k$. The usual $L^2$ argument supplies
a constant cap, whereas the energy inner product supplies a second cap that
decays at high frequency. Their minimum is the crucial input in the next
section.

\section{The Frequency Envelope from Energy Orthogonality}
\label{sec:envelope}

We now derive the main Fourier-space estimate. The first half is the familiar
$L^2$ Bessel bound. The second half uses the Dirichlet condition through weak
integration by parts and is responsible for the improvement.

\begin{proposition}[Improved Fourier projection estimate]\label[proposition]{prop:envelope}
For every $k\in\N$ and almost every $\xi\in\R^n$,
\begin{equation}\label{eq:projection-envelope}
F_k(\xi)
\leq \frac{V}{(2\pi)^n}
\min\left\{1,\frac{\lambda_k}{\abs\xi^2}\right\},
\end{equation}
where only the first term is used at $\xi=0$.
\end{proposition}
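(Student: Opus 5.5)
We prove the two caps separately and then take their minimum.

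\emph{Constant cap.} Fix $\xi$ and set $e_\xi(x)=(2\pi)^{-n/2}e^{\mathrm{i}x\cdot\xi}\ind_\Omega(x)$, which lies in $L^2(\Omega)$ with $\norm{e_\xi}^2=V/(2\pi)^n$. By definition, $\wh\phi_j(\xi)=\ip{\phi_j}{e_\xi}$. Since $\{\phi_j\}$ is orthonormal, Bessel's inequality gives
\[
F_k(\xi)=\sum_{j\le k}\abs{\ip{\phi_j}{e_\xi}}^2\le \norm{e_\xi}^2=V(2\pi)^{-n}.
\]

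\emph{Energy cap.} Take $\xi\neq0$. The functions $\psi_j=\lambda_j^{-1/2}\phi_j$ are orthonormal for the Dirichlet inner product $\ip{\nabla u}{\nabla v}$ on $H_0^1(\Omega)$, because
\[
\int\nabla\phi_i\cdot\nabla\phi_j\dd x=\lambda_j\delta_{ij},
\]
which follows from the weak form of the eigenvalue equation. We write the plane wave as a divergence,
\[
e^{-\mathrm{i}x\cdot\xi}=\diver\Phi_\xi(x),\qquad \Phi_\xi(x)=\frac{\mathrm{i}\xi}{\abs\xi^2}e^{-\mathrm{i}x\cdot\xi}.
\]
Since $\phi_j\in H_0^1(\Omega)$ extended by zero, weak integration by parts (test functions in $C_c^\infty(\Omega)$, then density in $H_0^1$) gives
\[
\wh\phi_j(\xi)=-(2\pi)^{-n/2}\int_\Omega\nabla\phi_j\cdot\Phi_\xi\dd x .
\]
No boundary term appears because $\phi_j\in H^1_0$; this is the only point where the Dirichlet condition enters. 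Hence $\wh\phi_j(\xi)=\lambda_j^{1/2}\ip{\nabla\psi_j}{G_\xi}_{L^2(\Omega;\mathbb C^n)}$ for the vector field $G_\xi=-(2\pi)^{-n/2}\overline{\Phi_\xi}\ind_\Omega$, up to conjugation conventions. The vectors $\nabla\psi_j$ are orthonormal in $L^2(\Omega;\mathbb C^n)$, so Bessel's inequality in that space, together with $\lambda_j\le\lambda_k$, yields
\[
F_k(\xi)\le\lambda_k\sum_{j\le k}\abs{\ip{\nabla\psi_j}{G_\xi}}^2\le\lambda_k\norm{G_\xi}^2=\lambda_k(2\pi)^{-n}V\abs\xi^{-2}.
\]
Here we used $\abs{\Phi_\xi}=\abs\xi^{-1}$ pointwise.

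Combining the two caps gives \cref{eq:projection-envelope} for every $\xi\neq0$. At $\xi=0$ only the constant cap is used. Since $F_k$ is continuous ($\phi_j\in L^1$ because $\abs\Omega<\infty$), the bound in fact holds for every $\xi$, so ``almost every'' is automatic.

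The main obstacle is the integration by parts on an arbitrary open set of finite measure with no boundary regularity. It is handled through the definition of $H_0^1$ as the closure of $C_c^\infty(\Omega)$. For $u\in C_c^\infty$, the identity $\int u\,\diver\Phi=-\int\nabla u\cdot\Phi$ holds trivially, with $\Phi$ smooth and bounded. Both sides are continuous in the $H^1$ norm because $\Phi\ind_\Omega\in L^2$ and $e^{-\mathrm{i}x\cdot\xi}\ind_\Omega\in L^2$, using $V<\infty$. The identity therefore extends to all of $H_0^1(\Omega)$. A secondary check is that restricting to $\Omega$, rather than working on $\R^n$, is what makes the norms finite and equal to $V$ times constants.
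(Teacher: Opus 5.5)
Your proof is correct and follows the paper's argument. The constant cap comes from Bessel's inequality against the restricted plane wave. The energy cap comes from writing $e^{-\mathrm{i}x\cdot\xi}=\diver\bigl(\mathrm{i}\xi\abs\xi^{-2}e^{-\mathrm{i}x\cdot\xi}\bigr)$, integrating by parts in $H_0^1(\Omega)$, and applying the vector-valued Bessel inequality to the orthonormal gradients $\nabla\phi_j/\sqrt{\lambda_j}$ with $\lambda_j\leq\lambda_k$. Your density justification of the integration by parts and your continuity remark add detail the paper leaves implicit.
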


\begin{proof}
Regard $(2\pi)^{-n/2}e^{\mathrm{i}x\cdot\xi}$ as an element of
$L^2(\Omega)$. Bessel's inequality gives
\begin{equation}\label{eq:l2-bessel}
F_k(\xi)\leq(2\pi)^{-n}\int_\Omega1\dd x
=\frac{V}{(2\pi)^n}.
\end{equation}

To obtain the second bound, we use the weak formulation of
\cref{eq:eigenproblem}. The vector fields
\begin{equation}\label{eq:energy-orthogonal}
e_j=\frac{\nabla\phi_j}{\sqrt{\lambda_j}},
\qquad j\geq1,
\end{equation}
form an orthonormal family in $L^2(\Omega;\mathbb C^n)$. Fix $\xi\neq0$ and define
\[
X_\xi(x)=\frac{\mathrm{i}\xi}{\abs\xi^2}e^{-\mathrm{i}x\cdot\xi};
\qquad
\diver X_\xi=e^{-\mathrm{i}x\cdot\xi}.
\]
Because $\phi_j\in H_0^1(\Omega)$, weak integration by parts gives
\[
\wh\phi_j(\xi)
=-(2\pi)^{-\frac n2}\int_\Omega
\nabla\phi_j(x)\cdot X_\xi(x)\dd x.
\]
Consequently,
\begin{align*}
F_k(\xi)
&=(2\pi)^{-n}\sum_{j=1}^k
\lambda_j\left|\ip{e_j}{\overline{X_\xi}}_{L^2(\Omega;\mathbb C^n)}\right|^2\\
&\leq(2\pi)^{-n}\lambda_k
\sum_{j=1}^k\left|\ip{e_j}{\overline{X_\xi}}\right|^2\\
&\leq(2\pi)^{-n}\lambda_k\norm{X_\xi}_{L^2(\Omega)}^2
=\frac{V}{(2\pi)^n}\frac{\lambda_k}{\abs\xi^2},
\end{align*}
where the last inequality is the vector-valued $L^2$ Bessel inequality.
Combining this estimate with \cref{eq:l2-bessel} proves
\cref{eq:projection-envelope}.
\end{proof}

\begin{remark}\label{rem:source-improvement}
The constant bound \cref{eq:l2-bessel}, together with
\cref{eq:mass-moment}, recovers the classical Li--Yau inequality. The additional
constraint is active in the region
$\abs\xi>\sqrt{\lambda_k}$.
\revise{When the classical filling ball extends beyond
$\abs{\xi}=\sqrt{\lambda_k}$, the additional cap reduces the
available capacity there and forces the
second-moment-minimizing density to occupy a larger ball.}
\end{remark}

We are now in a position to combine the two ingredients. The projection
estimate supplies a radial obstacle depending on $L=\lambda_k$, and
\cref{lem:bathtub} identifies the density below that obstacle with the smallest
possible second moment. The final closure is provided by the elementary upper
bound $\sum_{j=1}^k\lambda_j\leq k\lambda_k$. Because the radial mass of an
$r^{-2}$ tail changes from a power law to a logarithm at $n=2$, the cases
$n\geq3$ and $n=2$ must be treated separately.

\begin{remark}[A zero-mean refinement]\label{rem:zero-mean-refinement}
The energy estimate admits a further refinement that is not used in the
radial calculations below. Since
$e_j=\nabla\phi_j/\sqrt{\lambda_j}$ and $\phi_j\in H_0^1(\Omega)$,
approximation by compactly supported smooth functions gives
\[
 \int_\Omega e_j(x)\dd x=0.
\]
For $\xi\ne0$, define
\[
 X_\xi(x)=\frac{\mathrm{i}\xi}{|\xi|^2}e^{-\mathrm{i}x\cdot\xi},
 \qquad
 Z_\xi=X_\xi-\frac1V\int_\Omega X_\xi\dd x,
 \qquad
 m_\Omega(\xi)=\int_\Omega e^{-\mathrm{i}x\cdot\xi}\dd x.
\]
Weak integration by parts and the zero-mean identity give
\[
 \wh\phi_j(\xi)
 =-(2\pi)^{-n/2}\sqrt{\lambda_j}\int_\Omega e_j\cdot X_\xi\dd x
 =-(2\pi)^{-n/2}\sqrt{\lambda_j}\int_\Omega e_j\cdot Z_\xi\dd x.
\]
The last integral is
$\langle e_j,\overline{Z_\xi}\rangle_{L^2(\Omega;\mathbb C^n)}$.
The vector-valued Bessel inequality therefore yields
\begin{align*}
 F_k(\xi)
 &\leq(2\pi)^{-n}\lambda_k
       \sum_{j=1}^k
       \left|\left\langle e_j,\overline{Z_\xi}\right\rangle\right|^2\\
 &\leq(2\pi)^{-n}\lambda_k\|Z_\xi\|_{L^2(\Omega)}^2\\
 &=\frac{V}{(2\pi)^n}\frac{\lambda_k}{|\xi|^2}
   \left(1-\frac{|m_\Omega(\xi)|^2}{V^2}\right).
\end{align*}
Consequently, \cref{eq:projection-envelope} can be sharpened to
\begin{equation}\label{eq:projection-envelope-2}
 F_k(\xi)\leq \frac{V}{(2\pi)^n}
 \min\left\{1,\frac{\lambda_k}{|\xi|^2}
 \left(1-\frac{|m_\Omega(\xi)|^2}{V^2}\right)\right\}.
\end{equation}
This capacity is generally nonradial. Exploiting it fully requires a
corresponding nonradial moment problem, so \cref{eq:projection-envelope-2}
is recorded here as a route to a further refinement rather than used in
the volume-only and hybrid theorems proved below.
\end{remark}

\section{Volume-Only Eigenvalue Bounds}\label{sec:main-results}

We first close the moment estimate using only
$\sum_{j\leq k}\lambda_j\leq k\lambda_k$. This yields an explicit bound
on every open set of finite positive measure. Set
\begin{equation}\label{eq:alpha}
\alpha_n=\frac{\omega_nV}{(2\pi)^n}.
\end{equation}

\subsection{Higher dimensions}

For $n\geq3$, let $x_n>1$ be the unique solution in $(1,\infty)$ of
\begin{equation}\label{eq:xn}
(n-2)x^n-nx^{n-2}+\frac{8}{n+2}=0,
\end{equation}
and define
\begin{equation}\label{eq:Kn-cn}
K_n=\frac{nx_n^{n-2}-2}{n-2},
\qquad
c_n=K_n^{-\frac{2}{n}}.
\end{equation}
These constants arise naturally when the lower bound for the Fourier second
moment is matched with its spectral upper bound. The resulting estimate is the
following.

\begin{theorem}[Universal improvement in higher dimensions]
\label{thm:main-high}
Let $\Omega\subset\R^n$ be an open set of finite positive measure and suppose that
$n\geq3$. Then, for every $k\geq1$,
\begin{equation}\label{eq:main-high}
\lambda_k\geq c_nC_nV^{-\frac{2}{n}}k^{\frac{2}{n}}.
\end{equation}
Moreover,
\begin{equation}\label{eq:strict-comparison}
\frac{n}{n+2}<c_n<1.
\end{equation}
\end{theorem}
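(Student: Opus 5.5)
The plan is to feed the envelope of \cref{prop:envelope} into \cref{lem:bathtub} with $L=\lambda_k$. We then close the resulting moment inequality using $\sum_{j\le k}\lambda_j\le k\lambda_k$ and the identities \cref{eq:mass-moment}.

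\textbf{Setting up the bathtub problem.} Take
\[
h(r)=\frac{V}{(2\pi)^n}\min\{1,L r^{-2}\}.
\]
This $h$ is nonincreasing and locally integrable, and $F_k\le h(\abs\xi)$ by \cref{prop:envelope}. For $n\ge3$ the tail $\int^\infty r^{-2}r^{n-1}\dd r$ diverges, so the total capacity is infinite and the filling radius $R$ for mass $M=k$ is finite. There are two cases.

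\emph{Case $R\le\sqrt L$.} Here $k=\alpha_nR^n\le\alpha_nL^{n/2}$. This gives $\lambda_k\ge(k/\alpha_n)^{2/n}=C_nV^{-2/n}k^{2/n}$, which is stronger than \cref{eq:main-high} because $c_n<1$.

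\emph{Case $R=x\sqrt L$ with $x>1$.} Polar coordinates, as in \cref{eq:H-polar}, give the mass
\[
k=\alpha_nL^{n/2}\Bigl[1+\tfrac{n}{n-2}(x^{n-2}-1)\Bigr]=\alpha_nL^{n/2}K(x),
\qquad K(x)=\frac{nx^{n-2}-2}{n-2}.
\]
They also give the second moment of $g_R$:
\[
\alpha_nL^{n/2+1}\Bigl[\tfrac{n}{n+2}+x^n-1\Bigr].
\]

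\textbf{Closing the inequality.} By \cref{eq:bathtub} and \cref{eq:mass-moment},
\[
kL\ge\sum_{j\le k}\lambda_j\ge\alpha_nL^{n/2+1}\Bigl[\tfrac{n}{n+2}+x^n-1\Bigr].
\]
Dividing by $\alpha_nL^{n/2+1}$ yields $K(x)\ge \tfrac{n}{n+2}+x^n-1$. Multiplying by $n-2$ and simplifying, using $-2+2(n-2)/(n+2)=-8/(n+2)$, turns this into
\[
\varphi(x):=(n-2)x^n-nx^{n-2}+\tfrac{8}{n+2}\le0 .
\]
We have $\varphi(1)=-2+8/(n+2)<0$, and $\varphi'(x)=n(n-2)x^{n-3}(x^2-1)>0$ for $x>1$. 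Hence $\varphi$ has the unique root $x_n$ in $(1,\infty)$, and the constraint is exactly $1<x\le x_n$. Since $K$ is increasing, $k\le\alpha_nL^{n/2}K_n$. Using $\alpha_n^{-2/n}=C_nV^{-2/n}$, this gives
\[
\lambda_k=L\ge(k/(\alpha_nK_n))^{2/n}=c_nC_nV^{-2/n}k^{2/n}.
\]

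\textbf{Proving \cref{eq:strict-comparison}.} The upper bound $c_n<1$ is immediate: $x_n>1$ gives $K_n>K(1)=1$. For the lower bound we need $K_n<((n+2)/n)^{n/2}$. On the root, $\varphi(x_n)=0$ gives $K_n=x_n^n-\tfrac{2}{n+2}$.

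The most direct route is to define $y>1$ by $y^n=((n+2)/n)^{n/2}+\tfrac{2}{n+2}$ and show $\varphi(y)>0$. Monotonicity of $\varphi$ then forces $x_n<y$, which is exactly the required bound on $K_n$.

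I expect this elementary inequality, which must hold uniformly in $n\ge3$, to be the main obstacle. If the direct estimate is messy, the fallback is a structural argument in the scalar variables. Evaluate the constraint at the Li--Yau value $L_{LY}=\tfrac{n}{n+2}(k/\alpha_n)^{2/n}$. There the classical ball has radius $R_0=((n+2)/n)^{1/2}\sqrt{L_{LY}}>\sqrt{L_{LY}}$, so the new cap is active. The equality analysis in \cref{rem:equality} shows that the truncated minimizer has second moment strictly larger than $kL_{LY}$. The inequality $\varphi(x)\le0$ therefore fails strictly at $L_{LY}$, and continuity and monotonicity in $L$ give $c_n>n/(n+2)$.
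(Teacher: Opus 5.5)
Your proposal is correct and follows the paper's proof: the same filling radius $R=x\sqrt L$, the same mass and moment formulas leading to $(n-2)x^n-nx^{n-2}+\frac{8}{n+2}\leq0$, and, for $c_n>n/(n+2)$, your fallback is the paper's argument at the Li--Yau value, with your appeal to \cref{rem:equality} making the strictness step explicit. You leave the direct route through $\varphi(y)>0$ unproved, but it is not needed, since the fallback already yields $K_n<((n+2)/n)^{n/2}$.
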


\begin{proof}
Write $L=\lambda_k$ and introduce the dimensionless parameter
\begin{equation}\label{eq:rho}
\rho=\frac{k}{\alpha_nL^{\frac n2}}.
\end{equation}
If $\rho\leq1$, then rearrangement of this inequality immediately gives the
P\'olya bound \cref{eq:polya}, which is stronger than
\cref{eq:main-high}. We may therefore assume that $\rho>1$.

In terms of $L$, the envelope in \cref{prop:envelope} is
\[
h_L(r)=\frac{V}{(2\pi)^n}\min\left\{1,\frac{L}{r^2}\right\}.
\]
By \cref{lem:bathtub}, the admissible density of mass $k$ with the smallest
second moment fills this envelope on a ball $B_R$. Put $R=x\sqrt L$.
The condition $\rho>1$ forces $x>1$, so the filling ball extends into the
decaying part of the envelope. Direct radial integration then yields
\begin{align}
k&=\alpha_nL^{\frac n2}
\frac{nx^{n-2}-2}{n-2},\label{eq:mass-high}\\
\sum_{j=1}^k\lambda_j
&\geq\alpha_nL^{1+\frac n2}
\left(x^n-\frac{2}{n+2}\right).
\label{eq:moment-high}
\end{align}
On the other hand, monotonicity of the eigenvalues gives the complementary
upper bound
\begin{equation}\label{eq:upper-moment}
\sum_{j=1}^k\lambda_j\leq k\lambda_k=kL.
\end{equation}
Substituting \cref{eq:mass-high} into
\cref{eq:moment-high,eq:upper-moment} gives
\begin{equation}\label{eq:root-inequality}
(n-2)x^n-nx^{n-2}+\frac{8}{n+2}\leq0.
\end{equation}
The derivative of the left-hand side is
\[
n(n-2)x^{n-3}(x^2-1)>0
\qquad (x>1).
\]
The same function is negative at $x=1$ and tends to $+\infty$ as
$x\to\infty$. Hence it has exactly one zero $x_n$ in $(1,\infty)$, and
\cref{eq:root-inequality} implies $x\leq x_n$. Using
\cref{eq:mass-high}, we therefore obtain
\[
\rho=\frac{nx^{n-2}-2}{n-2}\leq K_n.
\]
Returning to \cref{eq:rho} and using \cref{eq:weyl-constant}, we conclude that
\[
L\geq\left(\frac{k}{\alpha_nK_n}\right)^{\frac2n}
=c_nC_nV^{-\frac2n}k^{\frac2n}.
\]

It remains to locate the coefficient relative to the Li--Yau and P\'olya
constants. Since $K_n>1$, one immediately has $c_n<1$. To see that the
improvement over Li--Yau is strict without resorting to a lengthy algebraic
comparison, suppose that $L$ equals the right-hand side of
\cref{eq:li-yau-individual}. For the classical constant envelope, the bathtub
lower bound then equals $kL$. At that value of $L$,
\[
\rho=\left(\frac{n+2}{n}\right)^{\frac{n}{2}}>1,
\]
so the minimizing ball necessarily reaches the region
$\abs\xi>\sqrt L$. There the new envelope is strictly below the classical
constant one. Consequently, the smallest second moment of a density with mass
$k$ is strictly greater than $kL$, contradicting
\cref{eq:upper-moment}. Continuity now gives $c_n>n/(n+2)$ and completes the
proof.
\end{proof}

\subsection{The critical two-dimensional case}

When $n=2$, the radial mass of the $r^{-2}$ tail grows logarithmically, so the
power-law calculation above has to be replaced by a separate computation. Let
$y_2>1$ be the unique solution in $(1,\infty)$ of
\begin{equation}\label{eq:y2}
y-\log y=\frac32,
\end{equation}
and define
\begin{equation}\label{eq:c2}
K_2=1+\log y_2=y_2-\frac12,
\qquad
c_2=K_2^{-1}.
\end{equation}

\begin{theorem}[Universal improvement in dimension two]
\label{thm:main-2d}
Let $\Omega\subset\R^2$ be an open set of finite positive measure. Then, for every
$k\geq1$,
\begin{equation}\label{eq:main-2d}
\lambda_k\geq c_2\frac{4\pi k}{V},
\qquad c_2=0.5383068077\ldots.
\end{equation}
In particular, the coefficient improves the individual Li--Yau bound
$\lambda_k\geq2\pi k/V$ by approximately $7.66\%$.
\end{theorem}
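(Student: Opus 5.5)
The plan is to repeat the proof of \cref{thm:main-high} verbatim up to the radial integration. The only change is that for $n=2$ the mass carried by the $r^{-2}$ tail is logarithmic rather than a power law, so the scalar equation changes form.

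\emph{Setup.} Write $L=\lambda_k$. For $n=2$ one has $\alpha_2=\pi V/(4\pi^2)=V/(4\pi)$. Put $\rho=k/(\alpha_2L)$.
\begin{itemize}
\item If $\rho\leq1$, then $L\geq 4\pi k/V$, which is P\'olya's bound. Since $c_2<1$, this already implies \cref{eq:main-2d}.
\item If $\rho>1$, we proceed as follows. \Cref{prop:envelope} bounds $F_k$ by
\[
h_L(r)=\frac{V}{4\pi^2}\min\{1,L/r^2\}.
\]
This envelope is radial, nonincreasing and locally integrable. Its total mass is infinite in dimension two, so \cref{lem:bathtub} applies with $M=k$ and yields a finite filling radius $R$.
\end{itemize}
Write $R=x\sqrt L$. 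Since the constant part of the envelope carries mass exactly $\alpha_2L$ and $\rho>1$, we must have $x>1$.

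\emph{Radial integration.} Integrating in polar coordinates, the mass equation and the bathtub bound \cref{eq:bathtub} give
\[
k=\alpha_2L\,(1+2\log x),
\qquad
\sum_{j=1}^k\lambda_j\geq \alpha_2L^2\Bigl(x^2-\tfrac12\Bigr).
\]
The mass comes from $\int_0^{\sqrt L}r\dd r+L\int_{\sqrt L}^{R}r^{-1}\dd r$. The moment comes from $\int_0^{\sqrt L}r^3\dd r+L\int_{\sqrt L}^R r\dd r$.

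We combine these with the closure $\sum_{j\leq k}\lambda_j\leq kL$ from \cref{eq:upper-moment}, substitute $k$, and set $y=x^2>1$. This gives
\[
y-\tfrac12\leq 1+\log y,
\qquad\text{that is,}\qquad
y-\log y\leq\tfrac32.
\]

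\emph{Monotonicity and the bound.} The function $y\mapsto y-\log y$ is strictly increasing on $(1,\infty)$, equals $1<3/2$ at $y=1$, and tends to $\infty$. Hence \cref{eq:y2} has a unique root $y_2$, and the inequality above forces $y\leq y_2$. It follows that
\[
\rho=1+\log y\leq 1+\log y_2=K_2,
\]
where the identity $1+\log y_2=y_2-\tfrac12$ is just \cref{eq:y2}. Unwinding $\rho$ gives
\[
L\geq \frac{k}{\alpha_2K_2}=c_2\,\frac{4\pi k}{V}.
\]

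\emph{Locating the constant.} To show $c_2>1/2$ we need $K_2<2$, i.e.\ $y_2<5/2$. This holds because $5/2-\log(5/2)\approx1.584>3/2$, so monotonicity puts the root below $5/2$. Similarly, $y_2>1$ gives $K_2>1$ and hence $c_2<1$.

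The numerical value $c_2=0.5383068077\ldots$ is obtained by solving $y-\log y=3/2$ with Newton's method or bisection, which gives $y_2\approx2.35764$. The stated improvement is $2c_2\approx1.0766$, i.e.\ about $7.66\%$ over the individual Li--Yau coefficient $1/2$.

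The main obstacle is the logarithmic mass computation and checking that the filling radius lies in the tail regime. Everything else, including the bathtub lemma, the monotone closure and the uniqueness argument, transfers directly from the $n\geq3$ case.
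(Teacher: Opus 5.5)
Your proposal is correct and follows the paper's proof essentially step for step: the same case split at $k=\alpha_2 L$, the same logarithmic mass and moment integrals (you parametrize by $x=R/\sqrt L$ before setting $y=x^2$, while the paper writes $R^2=yL$ directly), the same closure $y-\log y\leq 3/2$, and the same monotonicity conclusion. Your extra check of $1/2<c_2<1$ via $y_2<5/2$ is a valid addition, but there is a small numerical slip: the root is $y_2\approx 2.35768$ (so $K_2\approx 1.85768$), not $2.35764$. This does not affect the argument.
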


\begin{proof}
Again write $L=\lambda_k$, and note that $\alpha_2=V/(4\pi)$. If
$k\leq\alpha_2L$, then the P\'olya bound already follows. Otherwise, write the
saturation radius as $R^2=yL$ with $y>1$. Radial integration gives
\begin{align}
k&=\alpha_2L(1+\log y),\label{eq:mass-2d}\\
\sum_{j=1}^k\lambda_j
&\geq\alpha_2L^2\left(y-\frac12\right).
\label{eq:moment-2d}
\end{align}
Combining \cref{eq:upper-moment,eq:mass-2d,eq:moment-2d}, we obtain
\[
y-\frac12\leq1+\log y,
\qquad\text{or equivalently}\qquad
y-\log y\leq\frac32.
\]
The function $y-\log y$ is strictly increasing for $y>1$. Hence
$y\leq y_2$, and therefore
\[
\frac{k}{\alpha_2L}=1+\log y\leq K_2.
\]
Rearranging proves \cref{eq:main-2d}. The stated numerical value follows from
\cref{eq:y2,eq:c2}.
\end{proof}

\subsection{Numerical comparison of the constants}

To illustrate the size of the improvement, \cref{tab:constants} lists the
coefficients in several dimensions. The relative gain is defined as
\[
\left(\frac{c_n}{n/(n+2)}-1\right)\times100\%.
\]
The displayed gains decrease across the sampled dimensions. Strict
positivity in every dimension $n\geq2$ follows from the preceding theorems.

\begin{table}[ht]
\centering
\caption{Comparison between the coefficient obtained here and the Li--Yau
coefficient.}\label{tab:constants}
\begin{tabular}{@{}cccc@{}}
\toprule
$n$ & Li--Yau coefficient $n/(n+2)$ & Present coefficient $c_n$
& Relative gain \\
\midrule
2  & 0.500000 & 0.538307 & 7.66\% \\
3  & 0.600000 & 0.622248 & 3.71\% \\
4  & 0.666667 & 0.681250 & 2.19\% \\
5  & 0.714286 & 0.724595 & 1.44\% \\
6  & 0.750000 & 0.757678 & 1.02\% \\
8  & 0.800000 & 0.804735 & 0.59\% \\
10 & 0.833333 & 0.836544 & 0.39\% \\
\bottomrule
\end{tabular}
\end{table}

\section{A Hybrid Refinement on Lipschitz Domains}\label{sec:hybrid}

The volume-only bound is obtained by comparing the optimal envelope
moment with $k\lambda_k$. This comparison discards the \revise{nonnegative deficits}
$\lambda_k-\lambda_j$ for $j<k$. We now retain a quantitatively controlled
part of their sum. The resulting refinement combines Fourier and geometric
information through two estimates for the same spectral quantity.

Although the explicit application in this section combines our
framework with Jiang--Lin's counting estimates on bounded
Lipschitz domains \cite{JiangLin2026}, the abstract analysis in the first three
subsections applies to any open set $\Omega\subset\R^n$ of
finite positive measure, with $n\geq2$.
Retain $V$, $C_n$, and $\alpha_n$ from
\cref{eq:weyl-constant,eq:alpha}, and set
\[
 p=\frac n2,\qquad W_k=C_nV^{-\frac{2}{n}}k^{\frac{2}{n}}
 =\left(\frac{k}{\alpha_n}\right)^{\frac{2}{n}},\qquad
 S_k=\sum_{j=1}^k\lambda_j.
\]
We use a strict counting function and the first Riesz mean:
\begin{equation}\label{eq:hybrid-count-riesz}
 N_\Omega(t)=\#\{j:\lambda_j<t\},\qquad
 R_1(L)=\sum_{j\geq1}(L-\lambda_j)_+.
\end{equation}

\subsection{The exact spectral deficit}

The relevant correction is an integral of the counting function.
Because this correction is subtracted from $k\lambda_k$, a counting
\emph{lower} bound supplies the required direction of comparison.

\begin{lemma}[Integrated counting and multiplicities]\label[lemma]{lem:hybrid-deficit}
For every $L>0$,
\begin{equation}\label{eq:hybrid-riesz-integral}
 R_1(L)=\int_0^L N_\Omega(t)\dd t.
\end{equation}
In particular, for every $k\geq1$,
\begin{equation}\label{eq:hybrid-exact-deficit}
 S_k=k\lambda_k-R_1(\lambda_k).
\end{equation}
Let $\nu:(0,\infty)\to[0,\infty)$ be measurable with
$\nu(t)\leq N_\Omega(t)$ almost everywhere, and define
\begin{equation}\label{eq:hybrid-capped-count}
 q_k(t)=\min\{k-1,\nu(t)\},\qquad
 J_k(L)=\int_0^L q_k(t)\dd t.
\end{equation}
Then $J_k$ is nondecreasing and $(k-1)$-Lipschitz, and
\begin{equation}\label{eq:hybrid-upper-sum}
 S_k\leq k\lambda_k-J_k(\lambda_k).
\end{equation}
\end{lemma}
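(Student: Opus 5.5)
The plan is to prove \cref{eq:hybrid-riesz-integral} termwise and then derive the remaining assertions from it. For each $j$ we have
\[
(L-\lambda_j)_+=\int_0^L\ind_{\{\lambda_j<t\}}\dd t,
\]
since the integrand equals $1$ exactly on $(\lambda_j,L]$ when $\lambda_j<L$, and vanishes identically otherwise. Only finitely many $\lambda_j$ lie below $L$, because $\lambda_j\nearrow\infty$. Hence the sum over $j$ is finite on $[0,L]$; alternatively, Tonelli applies because the terms are nonnegative. Summing and interchanging sum and integral gives
\[
\sum_j(L-\lambda_j)_+=\int_0^L\sum_j\ind_{\{\lambda_j<t\}}\dd t=\int_0^L N_\Omega(t)\dd t.
\]
Measurability of $N_\Omega$ is clear, since it is a nondecreasing step function.

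For \cref{eq:hybrid-exact-deficit}, set $L=\lambda_k$. The only care needed concerns multiplicities. For $j\ge k$ we have $\lambda_j\ge\lambda_k$, so $(\lambda_k-\lambda_j)_+=0$. For $j<k$ we have $\lambda_j\le\lambda_k$, so $(\lambda_k-\lambda_j)_+=\lambda_k-\lambda_j$, and this holds even when it is zero because of ties. Thus
\[
R_1(\lambda_k)=\sum_{j=1}^{k-1}(\lambda_k-\lambda_j)=k\lambda_k-S_k,
\]
where the $j=k$ term contributes $0$. No rearrangement of equal eigenvalues is involved, because the identity uses only the ordering $\lambda_1\le\cdots\le\lambda_k\le\lambda_{k+1}\le\cdots$.

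For the capped counting function, I first note that $q_k$ is measurable, being the minimum of two measurable functions, and satisfies $0\le q_k\le k-1$. Consequently $J_k$ is well defined, nondecreasing, and satisfies
\[
|J_k(L)-J_k(L')|\le(k-1)|L-L'|.
\]
The key observation is that $N_\Omega(t)\le k-1$ for every $t\le\lambda_k$: indeed $\{j:\lambda_j<t\}\subset\{j:\lambda_j<\lambda_k\}\subset\{1,\dots,k-1\}$. It follows that, for almost every $t\in(0,\lambda_k)$,
\[
q_k(t)\le\min\{k-1,N_\Omega(t)\}=N_\Omega(t).
\]
Integrating over $(0,\lambda_k)$ gives $J_k(\lambda_k)\le R_1(\lambda_k)$, and inserting this into \cref{eq:hybrid-exact-deficit} yields \cref{eq:hybrid-upper-sum}.

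None of these steps is a real obstacle. The only point requiring attention is the strictness convention: $N_\Omega$ counts $\lambda_j<t$. This choice makes the bound $N_\Omega\le k-1$ on $(0,\lambda_k]$ hold regardless of multiplicity, and it makes the interval endpoints in the termwise identity harmless, since single points have measure zero.
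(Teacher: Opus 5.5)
Your proposal is correct and follows essentially the same route as the paper. Both use the termwise identity $(L-\lambda_j)_+=\int_0^L\one_{\{\lambda_j<t\}}\dd t$ with Tonelli, evaluate at $L=\lambda_k$ using only the ordering, use $N_\Omega\leq k-1$ on $(0,\lambda_k]$ to get $q_k\leq N_\Omega$ and hence $J_k(\lambda_k)\leq R_1(\lambda_k)$, and deduce the regularity claims from $0\leq q_k\leq k-1$; you simply spell out details the paper leaves implicit.
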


\begin{proof}
For each $j$, $(L-\lambda_j)_+=\int_0^L\one_{\{\lambda_j<t\}}\dd t$.
Tonelli's theorem proves \cref{eq:hybrid-riesz-integral}. If $L=\lambda_k$,
all terms with $j>k$ vanish, while any terms equal to $L$ also contribute
zero. Thus $R_1(\lambda_k)=\sum_{j=1}^k(\lambda_k-\lambda_j)$, including
when $\lambda_k$ has multiplicity greater than one. For $0<t\leq\lambda_k$,
there are at most $k-1$ eigenvalues strictly below $t$. Consequently,
$q_k(t)\leq N_\Omega(t)$ and
$J_k(\lambda_k)\leq R_1(\lambda_k)$. This gives
\cref{eq:hybrid-upper-sum}. Finally, $0\leq q_k\leq k-1$ proves all
regularity assertions.
\end{proof}

The truncation at $k-1$ does not remove any usable information below the
true $\lambda_k$. Its additional benefit is that it controls the integral
for arbitrary trial values $L$, making the root problem globally monotone.
In contrast, neither $N_\Omega(\lambda_k)=k$ nor a substitution of
eigenvalue lower bounds into spectral deficits would be justified.

\subsection{The envelope moment for arbitrary trial energies}

To use \cref{eq:hybrid-upper-sum}, we need the bathtub value as a function
of an arbitrary positive $L$, rather than only at $L=\lambda_k$. Define
\begin{equation}\label{eq:hybrid-moment-definition}
 \mathcal M_k(L)=\inf\left\{\int_{\R^n}|\xi|^2f(\xi)\dd\xi:
 0\leq f(\xi)\leq\frac{V}{(2\pi)^n}
 \min\{1,L/|\xi|^2\},\quad\int_{\R^n}f=k\right\}.
\end{equation}
The value of the envelope at the origin is interpreted as $V/(2\pi)^n$.
For $n\geq2$, its total mass is infinite, whereas its mass on every finite
ball is finite. Thus the filling radius for every finite $k$ exists and
is finite.

\begin{lemma}[Explicit moment function]\label[lemma]{lem:hybrid-moment}
Write $\rho=k/(\alpha_nL^p)$. If $\rho\leq1$, then
\begin{equation}\label{eq:hybrid-moment-low}
 \mathcal M_k(L)=\frac{n}{n+2}kW_k.
\end{equation}
If $\rho>1$ and $n\geq3$, then
\begin{equation}\label{eq:hybrid-moment-high}
 x=\left(\frac{(n-2)\rho+2}{n}\right)^{1/(n-2)},\qquad
 \mathcal M_k(L)=\alpha_nL^{p+1}\left(x^n-\frac2{n+2}\right).
\end{equation}
In dimension two, the full formula is
\begin{equation}\label{eq:hybrid-moment-planar}
 \mathcal M_k(L)=
 \begin{cases}
 k^2/(2\alpha_2),&k\leq\alpha_2L,\\[2pt]
 \alpha_2L^2\bigl(e^{k/(\alpha_2L)-1}-\tfrac12\bigr),
     &k>\alpha_2L.
 \end{cases}
\end{equation}
The function $\mathcal M_k$ is continuous and nonincreasing on $(0,\infty)$,
tends to infinity as $L\downarrow0$, and is constant for $L\geq W_k$.
\end{lemma}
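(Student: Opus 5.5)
The plan is to compute $\mathcal M_k(L)$ directly from \cref{lem:bathtub}, since it identifies the infimum in \cref{eq:hybrid-moment-definition}. The envelope $h_L(r)=\frac{V}{(2\pi)^n}\min\{1,L/r^2\}$ is measurable, nonincreasing and locally integrable for $n\geq2$. Its total mass is infinite because $\int^\infty r^{n-3}\dd r=\infty$ when $n\geq2$. Hence the hypothesis $\int h_L\geq k$ holds and the filling radius $R$ is finite. \Cref{lem:bathtub} gives $\mathcal M_k(L)\geq\int_{B_R}|\xi|^2h_L$. The truncated envelope $g_R=h_L\ind_{B_R}$ is itself admissible with mass $k$, so the infimum is attained and
\[
\mathcal M_k(L)=\int_{B_R}|\xi|^2h_L(|\xi|)\dd\xi .
\]
Everything then reduces to radial integration with $H(r)=n\omega_n\int_0^r h_L(s)s^{n-1}\dd s$ from \cref{eq:H-polar}, using $\alpha_n=\omega_nV/(2\pi)^n$.

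\emph{Case $\rho\leq1$.} The constant part of the envelope on $B_{\sqrt L}$ has mass $\alpha_nL^{p}\geq k$, so $R\leq\sqrt L$ and $h_L$ is constant on $B_R$. The mass condition gives $\alpha_nR^n=k$, i.e.\ $R^2=W_k$. The moment is then $\frac{V}{(2\pi)^n}n\omega_n R^{n+2}/(n+2)=\frac{n}{n+2}kW_k$, which proves \cref{eq:hybrid-moment-low}. For $n=2$ this reads $k^2/(2\alpha_2)$, since $W_k=k/\alpha_2$.

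\emph{Case $\rho>1$.} Here $R=x\sqrt L$ with $x>1$. For $n\geq3$ the mass is $\alpha_nL^p\bigl(1+\tfrac{n}{n-2}(x^{n-2}-1)\bigr)=\alpha_nL^p\frac{nx^{n-2}-2}{n-2}$, exactly as in \cref{eq:mass-high}. Solving $\rho=\frac{nx^{n-2}-2}{n-2}$ gives the stated $x$. The moment is $\alpha_nL^{p+1}\bigl(\tfrac{n}{n+2}+x^n-1\bigr)=\alpha_nL^{p+1}\bigl(x^n-\tfrac{2}{n+2}\bigr)$, matching \cref{eq:moment-high}. For $n=2$, \cref{eq:mass-2d,eq:moment-2d} give $k=\alpha_2L(1+\log y)$ with $y=x^2$. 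Hence $y=e^{k/(\alpha_2L)-1}$, which yields \cref{eq:hybrid-moment-planar}.

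\emph{Qualitative properties.} Constancy for $L\geq W_k$ holds because $\rho\leq1\iff L\geq W_k$, and in that range the formula does not depend on $L$.

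Monotonicity follows from the variational definition rather than the formulas. If $L<L'$, then $h_L\leq h_{L'}$ pointwise, so the admissible set for $L$ is contained in the one for $L'$, and therefore $\mathcal M_k(L)\geq\mathcal M_k(L')$.

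For continuity, the explicit formulas are continuous on each piece, and $x$ depends continuously on $\rho$. At $\rho=1$ we have $x=1$, and the high formula gives $\alpha_nL^{p+1}\frac{n}{n+2}$. Since $\alpha_nL^p=k$ and $L=W_k$ there, this equals $\frac{n}{n+2}kW_k$, so the pieces agree. In the planar case, $y=1$ gives $\alpha_2L^2/2=k^2/(2\alpha_2)$, again matching.

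For the limit as $L\downarrow0$, we have $\rho\to\infty$. For $n\geq3$, $x^n\sim(\tfrac{n-2}{n}\rho)^{n/(n-2)}$, so
\[
\alpha_nL^{p+1}x^n\sim C\,k^{n/(n-2)}L^{\,p+1-\frac{n}{2}\cdot\frac{n}{n-2}}.
\]
The exponent is $1-\frac{n}{n-2}=-\frac{2}{n-2}<0$, so this tends to infinity. In the planar case, $L^2e^{k/(\alpha_2L)}\to\infty$.

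The main point needing care is the first step: equating the infimum with the bathtub value, together with finiteness of $R$. This is handled by the admissibility of $g_R$ and the divergence of the tail mass. The rest is bookkeeping already carried out in the proofs of \cref{thm:main-high,thm:main-2d}.
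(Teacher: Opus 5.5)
Your proposal is correct and follows the paper's proof essentially step for step. You identify the minimizer by the bathtub principle, reuse the radial mass and moment integrals from the proofs of \cref{thm:main-high,thm:main-2d}, obtain continuity from agreement of the formulas at $\rho=1$, obtain monotonicity from inclusion of the admissible sets, and get the $L^{-2/(n-2)}$ (respectively exponential) blow-up as $L\downarrow0$. Your explicit observation that $g_R$ is itself admissible, so the infimum is attained and equals the bathtub value, fills in a step the paper leaves implicit.
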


\begin{proof}
By \cref{lem:bathtub}, the minimizer fills the envelope on a ball. If
$\rho\leq1$, its radius is $\sqrt{W_k}\leq\sqrt L$, so the obstacle is
constant throughout the filling ball. This gives
\cref{eq:hybrid-moment-low}. For $\rho>1$, write the radius as $x\sqrt L$.
The mass and moment integrals are exactly those in
\cref{eq:mass-high,eq:moment-high}; solving the mass equation proves
\cref{eq:hybrid-moment-high}. In dimension two,
$\rho=1+\log y$ with $y=R^2/L$, so $y=e^{\rho-1}$ and
\cref{eq:mass-2d,eq:moment-2d} give \cref{eq:hybrid-moment-planar}.

The formulas agree at $\rho=1$, proving continuity there and hence
everywhere. If $L$ increases, the admissible set in
\cref{eq:hybrid-moment-definition} enlarges, so the infimum cannot
increase. For $n\geq3$, the high-frequency formula grows like a positive
constant times $L^{-2/(n-2)}$ as $L\downarrow0$; for $n=2$, the
exponential in \cref{eq:hybrid-moment-planar} gives the same divergence.
Finally, $L\geq W_k$ is equivalent to $\rho\leq1$.
\end{proof}

\subsection{A unique-root lower bound and its comparison criteria}

The next theorem separates the analytic construction from the choice
of geometric input. It applies to any admissible $\nu$ and does not
require knowledge of the eigenfunctions.

\begin{theorem}[Hybrid spectral closure]\label{thm:hybrid-root}
Let $\nu$ satisfy the assumptions of \cref{lem:hybrid-deficit}, and let
$J_k$ be defined by \cref{eq:hybrid-capped-count}. The equation
\begin{equation}\label{eq:hybrid-root}
 \Phi_k(L):=\mathcal M_k(L)+J_k(L)-kL=0
\end{equation}
has a unique positive solution $L_k^{\mathrm{hyb}}$. For every $k\geq1$,
\begin{equation}\label{eq:hybrid-lower-bound}
 \lambda_k\geq L_k^{\mathrm{hyb}}\geq L_k^{(0)},\qquad
 L_k^{(0)}:=c_nW_k.
\end{equation}
The second inequality is strict if and only if
\begin{equation}\label{eq:hybrid-strict-original}
 J_k(L_k^{(0)})>0.
\end{equation}
More generally, for any other valid lower bound $B_k\leq\lambda_k$, put
$L_*:=\max\{L_k^{(0)},B_k\}$. Then
\begin{equation}\label{eq:hybrid-strict-maximum}
 L_k^{\mathrm{hyb}}>L_*
 \quad\Longleftrightarrow\quad
 \mathcal M_k(L_*)+J_k(L_*)>kL_*.
\end{equation}
\end{theorem}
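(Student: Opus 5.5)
The plan is to study the residual $\Phi_k(L)=\mathcal M_k(L)+J_k(L)-kL$ as a function of $L>0$, prove that it is continuous and strictly decreasing with a sign change, and then read off every assertion from monotonicity.

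\emph{Step 1: $\Phi_k$ is strictly decreasing.} By \cref{lem:hybrid-moment}, $\mathcal M_k$ is continuous and nonincreasing. By \cref{lem:hybrid-deficit}, $J_k$ is $(k-1)$-Lipschitz. For $L<L'$ we therefore have
\[
\Phi_k(L')-\Phi_k(L)\leq J_k(L')-J_k(L)-k(L'-L)\leq (k-1)(L'-L)-k(L'-L)=-(L'-L)<0 .
\]
So $\Phi_k$ is continuous and strictly decreasing. This is exactly where the truncation at $k-1$ is needed.

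\emph{Step 2: existence of the root.} As $L\downarrow0$, $\mathcal M_k(L)\to\infty$ while $J_k\geq0$ and $kL\to0$, so $\Phi_k\to+\infty$. For $L\geq W_k$, $\mathcal M_k$ is constant and $J_k(L)\leq (k-1)L$, hence $\Phi_k(L)\leq \mathrm{const}-L\to-\infty$. The intermediate value theorem and strict monotonicity then give a unique root $L_k^{\mathrm{hyb}}$, and the sign pattern
\[
\Phi_k>0 \text{ on } (0,L_k^{\mathrm{hyb}}),\qquad \Phi_k<0 \text{ on } (L_k^{\mathrm{hyb}},\infty).
\]

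\emph{Step 3: $\lambda_k\geq L_k^{\mathrm{hyb}}$.} At $L=\lambda_k$, the function $F_k$ is admissible in \cref{eq:hybrid-moment-definition} by \cref{prop:envelope} and \cref{eq:mass-moment}. Hence $\mathcal M_k(\lambda_k)\leq S_k$. Combining with \cref{eq:hybrid-upper-sum} gives
\[
\mathcal M_k(\lambda_k)\leq k\lambda_k-J_k(\lambda_k),
\]
that is, $\Phi_k(\lambda_k)\leq0$. By Step 2 this forces $\lambda_k\geq L_k^{\mathrm{hyb}}$.

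\emph{Step 4: comparison with $L_k^{(0)}$.} I need $\mathcal M_k(L_k^{(0)})=kL_k^{(0)}$. This is the step requiring care, since the proofs of \cref{thm:main-high,thm:main-2d} were phrased at $L=\lambda_k$; I plan to redo that algebra at a general $L$.
\begin{itemize}
\item At $L=c_nW_k$ one has $\rho=K_n>1$. For $n\geq3$, solving the mass equation gives $x=x_n$. By \cref{eq:xn}, \cref{eq:moment-high} then equals $kL$ exactly, because the inequality \cref{eq:root-inequality} becomes an equality at $x_n$.
\item For $n=2$, $y=e^{K_2-1}=y_2$, and $y_2-\tfrac12=K_2$ gives $\mathcal M_k=\alpha_2L^2K_2=kL$.
\end{itemize}
Consequently $\Phi_k(L_k^{(0)})=J_k(L_k^{(0)})\geq0$, so $L_k^{\mathrm{hyb}}\geq L_k^{(0)}$. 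Strictness holds if and only if $\Phi_k(L_k^{(0)})>0$, that is, if and only if \cref{eq:hybrid-strict-original} holds.

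\emph{Step 5: the general criterion \cref{eq:hybrid-strict-maximum}.} By the strict sign pattern in Step 2, $L_k^{\mathrm{hyb}}>L_*$ holds exactly when $\Phi_k(L_*)>0$, and $\Phi_k(L_*)>0$ is the stated inequality. The hypothesis $B_k\leq\lambda_k$ only guarantees that $L_*\leq\lambda_k$, which makes the comparison meaningful; the equivalence itself is pure monotonicity.
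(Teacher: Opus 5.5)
Your proposal is correct and follows the paper's proof essentially step for step: strict monotonicity of $\Phi_k$ from the $(k-1)$-Lipschitz bound on $J_k$, the limits at $0$ and $\infty$, admissibility of $F_k$ at $L=\lambda_k$, the identity $\mathcal M_k(L_k^{(0)})=kL_k^{(0)}$, and the sign test at $L_*$. Your explicit check that $\rho=K_n$ at $L=c_nW_k$ forces $x=x_n$ (resp.\ $y=y_2$) simply spells out the one-line appeal to the defining equations of $c_n$ that the paper makes.
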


\begin{proof}
For $L_2>L_1>0$, \cref{lem:hybrid-deficit,lem:hybrid-moment} imply
\begin{align}
 \Phi_k(L_2)-\Phi_k(L_1)
 &\leq (k-1)(L_2-L_1)-k(L_2-L_1)\notag\\
 & =-(L_2-L_1)<0.\label{eq:hybrid-strong-monotonicity}
\end{align}
Thus $\Phi_k$ is continuous and strictly decreasing. As $L\downarrow0$,
$\Phi_k(L)\to+\infty$. For $L\geq W_k$,
\[
 \Phi_k(L)\leq\frac{n}{n+2}kW_k-L\longrightarrow-\infty.
\]
The intermediate value theorem gives a unique positive zero.

At the true eigenvalue, \cref{prop:envelope,eq:mass-moment} and the
definition of $\mathcal M_k$ give
$\mathcal M_k(\lambda_k)\leq S_k$. Combining this with
\cref{eq:hybrid-upper-sum} yields $\Phi_k(\lambda_k)\leq0$ and hence
$\lambda_k\geq L_k^{\mathrm{hyb}}$.

The constants in \cref{eq:Kn-cn,eq:c2} were obtained by matching the
same bathtub value to $kL$. Equivalently, their defining equations and
\cref{lem:hybrid-moment} give
$\mathcal M_k(L_k^{(0)})=kL_k^{(0)}$. Therefore
$\Phi_k(L_k^{(0)})=J_k(L_k^{(0)})\geq0$.
Strict monotonicity proves both the second inequality in
\cref{eq:hybrid-lower-bound} and its equality criterion.
Applying the same sign test at $L_*$ proves
\cref{eq:hybrid-strict-maximum}.
\end{proof}

For $k=1$ the truncation makes $J_1=0$, so this particular refinement
does not improve the first eigenvalue. For $k\geq2$, strict improvement
requires usable lower-counting information on a set of positive measure
below $L_k^{(0)}$. Neither a spectral gap at a single point nor a formal
substitution of lower bounds for earlier eigenvalues establishes this
condition.

\begin{corollary}[Monotonicity in the geometric input]\label[corollary]{cor:hybrid-input}
If $0\leq\nu_1\leq\nu_2\leq N_\Omega$ almost everywhere, then
$L_k^{\mathrm{hyb}}[\nu_1]\leq L_k^{\mathrm{hyb}}[\nu_2]$.
\end{corollary}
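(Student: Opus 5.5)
The comparison is pointwise in the capped integrals, and the conclusion then follows from the sign test used in \cref{thm:hybrid-root}. We write $q_k^{(i)}=\min\{k-1,\nu_i\}$ and $J_k^{(i)}$ for the corresponding integrals from \cref{eq:hybrid-capped-count}. We also write $\Phi_k^{(i)}=\mathcal M_k+J_k^{(i)}-kL$.

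\emph{Step 1.} Since $t\mapsto\min\{k-1,t\}$ is nondecreasing, $\nu_1\leq\nu_2$ almost everywhere gives $q_k^{(1)}\leq q_k^{(2)}$ almost everywhere. Integrating over $(0,L)$ gives $J_k^{(1)}(L)\leq J_k^{(2)}(L)$ for every $L>0$.

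\emph{Step 2.} The moment function $\mathcal M_k$ does not depend on $\nu$, so $\Phi_k^{(1)}\leq\Phi_k^{(2)}$ pointwise on $(0,\infty)$. Both $\nu_1$ and $\nu_2$ satisfy the hypotheses of \cref{lem:hybrid-deficit}. Hence \cref{thm:hybrid-root} applies to each, and gives unique positive roots $L^{(i)}:=L_k^{\mathrm{hyb}}[\nu_i]$ at which $\Phi_k^{(i)}$ changes sign from positive to negative.

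\emph{Step 3.} Evaluate at $L^{(1)}$:
\[
0=\Phi_k^{(1)}(L^{(1)})\leq\Phi_k^{(2)}(L^{(1)}).
\]
Suppose $L^{(2)}<L^{(1)}$. Strict monotonicity of $\Phi_k^{(2)}$, from \cref{eq:hybrid-strong-monotonicity}, would then give $\Phi_k^{(2)}(L^{(1)})<\Phi_k^{(2)}(L^{(2)})=0$, which is a contradiction. Therefore $L^{(1)}\leq L^{(2)}$.

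There is no real obstacle. The only points to check are that truncation at $k-1$ preserves the ordering, which holds because the cap is monotone, and that both inputs are admissible. The hypothesis $\nu_2\leq N_\Omega$ ensures the second, and it implies $\nu_1\leq N_\Omega$ as well.
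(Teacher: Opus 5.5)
Your proof is correct and follows the same route as the paper. The paper also argues that $J_{k,1}\leq J_{k,2}$ gives $\Phi_{k,1}\leq\Phi_{k,2}$, and then orders the unique zeros using the strict monotonicity \cref{eq:hybrid-strong-monotonicity}; your Step~3 simply writes out that comparison explicitly.
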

\begin{proof}
The corresponding integrals satisfy $J_{k,1}\leq J_{k,2}$, so
$\Phi_{k,2}\geq\Phi_{k,1}$. Compare their unique zeros using
\cref{eq:hybrid-strong-monotonicity}.
\end{proof}

\begin{corollary}[A strict upper bound for the hybrid root]
\label[corollary]{cor:hybrid-weyl-upper}
Under the assumptions of \cref{thm:hybrid-root}, the hybrid root satisfies
\[
 L_k^{\mathrm{hyb}}<W_k,\qquad k\geq1.
\]
\end{corollary}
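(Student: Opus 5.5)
Since $\Phi_k$ is continuous and strictly decreasing with unique zero $L_k^{\mathrm{hyb}}$ (\cref{thm:hybrid-root}), it suffices to show $\Phi_k(W_k)<0$. Then $L_k^{\mathrm{hyb}}<W_k$ follows at once, because $\Phi_k(L)\geq\Phi_k(W_k)+(W_k-L)>0$ can only vanish strictly to the left of $W_k$; more simply, a strictly decreasing function that is negative at $W_k$ has its zero below $W_k$.

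We evaluate the three terms of $\Phi_k(W_k)$ in turn.

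\emph{The moment term.} At $L=W_k$ we have $\rho=k/(\alpha_nW_k^{p})=1$. Hence \cref{lem:hybrid-moment} gives
\[
 \mathcal M_k(W_k)=\frac{n}{n+2}kW_k .
\]
In dimension two this reads $k^2/(2\alpha_2)=\tfrac12 kW_2$, since $W_k=k/\alpha_2$.

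\emph{The counting term.} By definition $q_k\leq k-1$, so $J_k(W_k)\leq (k-1)W_k$.

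\emph{Combining.} Putting these together,
\[
 \Phi_k(W_k)\leq \frac{n}{n+2}kW_k+(k-1)W_k-kW_k
 =W_k\Bigl(\frac{n}{n+2}k-1\Bigr).
\]
This is negative only when $k<(n+2)/n$, which fails for large $k$. So the crude cap $J_k\leq (k-1)L$ is too lossy, and this is the main obstacle. One must use the actual constraint $\nu\leq N_\Omega$, which is not bounded in terms of $W_k$ a priori.

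\emph{Fix: exploit that $\lambda_k$ is an upper bound on the root.} Instead of bounding $J_k$ crudely, we compare with the true spectral quantity. For $L\geq\lambda_k$ the relevant facts are:
\[
 q_k(t)\leq\min\{k-1,N_\Omega(t)\},
 \qquad
 \int_0^{L}\min\{k-1,N_\Omega(t)\}\dd t = R_1(\lambda_k)+(k-1)(L-\lambda_k),
\]
the latter because $N_\Omega(t)\geq k-1$ for $t>\lambda_{k-1}$.

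The key point is that $\lambda_k$ is itself bounded. Li--Yau gives $S_k\geq\frac{n}{n+2}kW_k$, but that bound goes the wrong direction for our purpose. Instead I would argue directly through the envelope, as follows.

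\emph{Case $\lambda_k\leq W_k$.} Here $L_k^{\mathrm{hyb}}\leq\lambda_k\leq W_k$. To get strictness, show $\Phi_k(\lambda_k)<0$. Using $\mathcal M_k(\lambda_k)\leq S_k$ and $J_k(\lambda_k)\leq R_1(\lambda_k)$, the question is whether $\mathcal M_k(\lambda_k)<S_k$. This is strict: equality in the bathtub (\cref{rem:equality}) would force $F_k$ to equal the envelope on a ball and vanish outside it. That is impossible, since $\wh\phi_j$ are real-analytic functions (the Fourier transforms of compactly supported... more precisely finite-measure-supported $L^2\cap L^1$ functions), so $F_k$ is analytic and cannot vanish on an open set without vanishing identically, which contradicts $\int F_k=k$. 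Hence $\Phi_k(\lambda_k)<0$, and therefore $L_k^{\mathrm{hyb}}<\lambda_k\leq W_k$.

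\emph{Case $\lambda_k>W_k$.} Here we need $\Phi_k(W_k)<0$ directly. We use $J_k(W_k)\leq R_1$-type bounds truncated at $W_k$, namely
\[
 J_k(W_k)\leq\int_0^{W_k}\min\{k-1,N_\Omega\}\dd t .
\]
I would then use the Berezin-type inequality $R_1(L)\leq\frac{2}{n+2}\alpha_nL^{p+1}$, which is itself a bathtub consequence. This gives
\[
 \Phi_k(W_k)\leq\frac{n}{n+2}kW_k+\frac{2}{n+2}kW_k-kW_k=0 .
\]
Strictness then again comes from the analyticity argument, which rules out equality in Berezin. Confirming the Berezin bound from the paper's tools, through the dual bathtub, is the step I would verify carefully.
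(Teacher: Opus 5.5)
Your argument in the case $\lambda_k>W_k$ is exactly the paper's proof. There, $J_k(W_k)\le R_1(W_k)$, the Berezin bound $R_1(L)\le\frac{2}{n+2}\alpha_nL^{p+1}$, and $\mathcal M_k(W_k)=\frac{n}{n+2}kW_k$ give $\Phi_k(W_k)\le0$, and strictness of Berezin upgrades this to $\Phi_k(W_k)<0$. Nothing in that computation uses $\lambda_k>W_k$, so the case split and all of Case~1 are unnecessary. The Berezin bound you leave to be checked takes three lines with the paper's tools. For $m=N_\Omega(L)$, \cref{eq:mass-moment} gives $R_1(L)=mL-S_m=\int_{\R^n}(L-\abs\xi^2)F_m(\xi)\dd\xi$. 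Discard the region $\abs\xi\ge\sqrt L$, where the integrand is nonpositive, and apply the constant cap \cref{eq:l2-bessel} on $B_{\sqrt L}$.

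The genuine gap is the strictness step. You derive $\Phi_k(W_k)<0$, and the strict bathtub inequality in Case~1, from the claim that $\wh\phi_j$ is real-analytic because $\phi_j$ is an $L^1\cap L^2$ function supported on a set of finite measure. That principle is false. A nonnegative $L^1$ function supported on an unbounded set of finite measure can have infinite second moment, and then its Fourier transform is not even twice differentiable at the origin. The conclusion does hold for bounded $\Omega$ by Paley--Wiener. However, the corollary is asserted for every open set of finite positive measure, and there analyticity of the eigenfunction transforms would need an Agmon-type exponential decay estimate that you do not supply.

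The paper avoids analyticity and uses only continuity of $F_m$, which follows from $\phi_j\in L^1$. Equality in the second Berezin step would force $F_m=V/(2\pi)^n$ a.e.\ on $B_{\sqrt L}$, hence $F_m(0)=V/(2\pi)^n$. That is equality in Bessel's inequality at $\xi=0$, so $1=\sum_{j\le m}a_j\phi_j$ in $L^2(\Omega)$. Taking weak gradients and using energy orthogonality gives $\sum_{j\le m}\lambda_j\abs{a_j}^2=0$, a contradiction; the case $m=0$ is trivial. If you replace the analyticity argument with this one, your proof is complete. The same observation would also make Case~1 rigorous, though you do not need Case~1.
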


\begin{proof}
Since $q_k\leq N_\Omega$ almost everywhere,
\cref{eq:hybrid-riesz-integral} gives
\[
 J_k(L)\leq R_1(L),\qquad L>0.
\]
Fix $L>0$, set $m=N_\Omega(L)$, and let
$F_m=\sum_{j=1}^m|\wh\varphi_j|^2$, with $F_0=0$.
By \cref{eq:mass-moment,eq:l2-bessel},
\[
 R_1(L)=\int_{\R^n}(L-|\xi|^2)F_m(\xi)\dd\xi,
 \qquad 0\leq F_m\leq V/(2\pi)^n.
\]
Discarding the nonpositive contribution outside $B_{\sqrt L}(0)$ and
integrating in polar coordinates, we obtain
\begin{align*}
 R_1(L)
 &\leq\int_{|\xi|<\sqrt L}(L-|\xi|^2)F_m(\xi)\dd\xi\\
 &\leq V/(2\pi)^n\int_{|\xi|<\sqrt L}(L-|\xi|^2)\dd\xi\\
 &=V/(2\pi)^n n\omega_n\int_0^{\sqrt L}(L-r^2)r^{n-1}\dd r
 =\frac{2\alpha_n}{n+2}L^{p+1}.
\end{align*}
The resulting bound is strict. This is immediate if $m=0$. 
\revise{Otherwise, equality $F_m(0)=V/(2\pi)^n$ in Bessel's inequality would imply
$1=\sum_{j=1}^m a_j\varphi_j$ in $L^2(\Omega)$ with $a_j=\ip{\varphi_j}{1}_{L^2(\Omega)}$. 
Since the finite linear combination equals the constant function in \(L^2(\Omega)\), 
its weak gradient vanishes. Energy orthogonality then gives}
\[
 0=\int_\Omega\left|\nabla\sum_{j=1}^m a_j\varphi_j\right|^2\dd x
   =\sum_{j=1}^m\lambda_j|a_j|^2,
\]
which forces every $a_j$ to vanish, a contradiction. Thus $F_m(0)<V/(2\pi)^n$.
Moreover, $\|\varphi_j\|_{L^1(\Omega)}\leq V^{1/2}$, so $F_m$ is continuous.
Consequently $F_m<V/(2\pi)^n$ on a neighborhood of the origin contained in
$B_{\sqrt L}(0)$, making the second integral inequality strict. Hence
\[
 J_k(L)\leq R_1(L)<\frac{2\alpha_n}{n+2}L^{p+1},\qquad L>0.
\]
Using $\alpha_nW_k^p=k$ and \cref{eq:hybrid-moment-low}, we conclude that
\[
 \Phi_k(W_k)=J_k(W_k)-\frac{2}{n+2}kW_k<0.
\]
Since $\Phi_k$ is strictly decreasing and
$\Phi_k(L_k^{\mathrm{hyb}})=0$, this proves $L_k^{\mathrm{hyb}}<W_k$.
\end{proof}

\subsection{An explicit Lipschitz-domain input from Jiang--Lin}
For the explicit construction in this subsection, assume that
$\Omega$ is a bounded Lipschitz domain. 
It remains to supply a lower count that is valid at continuous energy
parameters, not just a relation between $k$ and $\lambda_k$. This is
provided by the proof of Jiang--Lin~\cite[Section 4, Eq.~(4.1)]{JiangLin2026}.
Write $P=|\partial\Omega|$ and $r=r_{\mathrm{in}}(\Omega)>0$.
Choose a valid boundary-layer constant $\mathcal C_\Omega\geq1$ such that
\begin{equation}\label{eq:hybrid-boundary-layer}
 |\{x\in\Omega:\operatorname{dist}(x,\partial\Omega)<s\}|
 \leq\mathcal C_\Omega P s\qquad(s>0).
\end{equation}
Such a constant exists for a bounded Lipschitz domain: a local tubular
estimate handles small $s$, and enlargement of the constant handles the
remaining scales using the bound $V$. One may enlarge the constant used
in \cite{JiangLin2026} if necessary; a smallest constant is not required.

Their Whitney-cube argument gives, for $t>0$ and $0<d<r$,
\begin{equation}\label{eq:hybrid-jl-input}
 N_\Omega(t)\geq\alpha_n t^p
 -\frac{\alpha_n\mathcal C_\Omega P}{V}t^{p-1/2}
 \left(d\sqrt t+5n^2\pi\log_2\frac{10r}{d}\right).
\end{equation}
For clarity, the mechanism behind this input is as follows. On a cube of
side length $\ell$, lattice counting yields the lower estimate
\[
 N_Q(t)\geq\frac{\omega_n}{(2\pi)^n}
 \bigl(\ell^nt^p-n^{3/2}\pi\ell^{n-1}t^{p-1/2}\bigr).
\]
Zero extension of the cube trial spaces allows the counts to be summed.
The discarded Whitney cubes lie in a boundary layer, whose missing
volume is controlled by \cref{eq:hybrid-boundary-layer}. At each dyadic
scale the sum of the cube errors is bounded by the same multiple of
$Pt^{p-1/2}$; summing the scales introduces the logarithm. The explicit
constants and cutoff accounting are those in the cited proof.

A convenient all-energy choice is
\begin{align}
 d(t)&=\min\{r/2,t^{-1/2}\},\notag\\
 \nu_{\mathrm{JL}}(t)
 &=\left[\alpha_n t^p
 -\frac{\alpha_n\mathcal C_\Omega P}{V}t^{p-1/2}
 \left(d(t)\sqrt t+5n^2\pi\log_2\frac{10r}{d(t)}\right)\right]_+.
 \label{eq:hybrid-explicit-count}
\end{align}
Set $\nu_{\mathrm{JL}}(0)=0$. This function is measurable, locally
bounded, and satisfies $0\leq\nu_{\mathrm{JL}}\leq N_\Omega$.
It can therefore be inserted directly into \cref{thm:hybrid-root}.
The parameter $d$ is a geometric truncation length; it is not the
$\varepsilon$ in the epsilon-loss formulation of P\'olya's conjecture.

This construction yields a valid bound at every index, although the
strict-improvement condition may fail at low energies where the positive
part in \cref{eq:hybrid-explicit-count} is zero. More efficient geometric
decompositions can be used instead. For example, disjoint interior cubes
give a lower count by summing their explicit spectra, and independent
certified eigenvalue upper bounds give another lower count. Taking the
maximum of such lower counts is admissible, and
\cref{cor:hybrid-input} shows that it cannot weaken the hybrid bound.

\subsection{Implementation, comparison, and scaling}

The theorem reduces the calculation to integration and a scalar monotone
root. By \cref{thm:hybrid-root,cor:hybrid-weyl-upper}, a guaranteed
bracket is
\begin{equation}\label{eq:hybrid-bracket}
L_k^{(0)}\leq L_k^{\mathrm{hyb}}<W_k.
\end{equation}
The proof of \cref{thm:hybrid-root} gives
$\Phi_k(L_k^{(0)})\geq0$, while the proof of
\cref{cor:hybrid-weyl-upper} gives $\Phi_k(W_k)<0$.
Thus bisection on $[L_k^{(0)},W_k]$ is sufficient. For a certified numerical
lower bound, evaluate a lower enclosure of $J_k$ and a controlled
enclosure of $\mathcal M_k$, and retain a trial value only when the lower
enclosure of $\Phi_k$ is nonnegative. An ordinary floating-point root is
an approximation, not automatically a certified spectral bound.

In \cref{eq:hybrid-strict-maximum}, $B_k$ may be the lower bound obtained
from the upper-counting side of Jiang--Lin's Theorem 1.5, or their
epsilon-loss estimate after its threshold has been verified. The hybrid
construction uses the other side of their counting theorem, so a gain
over the maximum of the two separate lower bounds is possible, but must
be checked by \cref{eq:hybrid-strict-maximum}.
In particular, \cref{cor:hybrid-weyl-upper} implies that if
$B_k\geq W_k$, then $L_k^{\mathrm{hyb}}<W_k\leq B_k$, so no such
improvement is possible.
No universal dominance over their complete geometric estimates is asserted.

The construction respects dilation. If $s>0$ and a lower count is
transported by $\nu_{s\Omega}(t)=\nu_\Omega(s^2t)$, then
\[
 \mathcal M_{k,s\Omega}(L)=s^{-2}\mathcal M_{k,\Omega}(s^2L),\qquad
 J_{k,s\Omega}(L)=s^{-2}J_{k,\Omega}(s^2L).
\]
Consequently $L_{k,s\Omega}^{\mathrm{hyb}}=s^{-2}L_{k,\Omega}^{\mathrm{hyb}}$.
The explicit choice \cref{eq:hybrid-explicit-count} has precisely this
covariance when the same dimensionless boundary-layer constant is used.
Thus the additional information is genuinely geometric, not a change of
physical scaling. The argument by itself does not improve the logarithmic
order of the Jiang--Lin remainder. 
Moreover, \cref{cor:hybrid-weyl-upper} shows that
$L_k^{\mathrm{hyb}}<W_k$ for every admissible counting input. Thus improving
$\nu$ alone, with the moment function $\mathcal M_k$ fixed, cannot make the
hybrid bound reach the P\'olya value $W_k$. 

The numerical illustration in Appendix~\ref{sec:disk-example} exhibits
how the two inputs interact. It also distinguishes approximate root
values from certified spectral bounds and from sharper estimates
available on special domains.

The hybrid result therefore complements, rather than replaces, the
volume-only estimate of \cref{sec:main-results}. The next section
compares that baseline with existing work and indicates how independent
information can be used alongside the hybrid construction.

\section{Relation to Existing Results}\label{sec:comparison}

We now compare the two levels of the preceding construction with
representative results in the literature. Use the common normalization
\[
 W_k=C_nV^{-2/n}k^{2/n},\qquad
 a_n=\frac{n}{n+2},\qquad
 S_k=\sum_{j=1}^k\lambda_j.
\]
The universal conclusion is $\lambda_k\geq c_nW_k$, whereas
\cref{thm:hybrid-root} provides an additional geometry-dependent lower
bound on bounded Lipschitz domains. Unless explicitly stated otherwise,
the comparisons below concern the universal baseline.

Three distinctions are essential. First, an estimate for a single
eigenvalue, one for an eigenvalue sum, and one for a Riesz mean are not
interchangeable: an improvement for one quantity need not transfer with
the same coefficient to another. Second, every geometric comparison is
made only under the hypotheses of the cited theorem, including finiteness
of the moment of inertia when it appears. Third, a statement valid for
every index is logically different from a coefficient-one result on a
special domain, above a frequency threshold, or along a subsequence.
These distinctions prevent numerical comparisons from overstating the
scope of the present bounds.

\subsection{The original Li--Yau theorem and the meaning of the improvement}

Li--Yau~\cite[Theorem 1]{LiYau1983} establishes
$S_k\geq a_nkW_k$; using $S_k\leq k\lambda_k$ gives
$\lambda_k\geq a_nW_k$. Our comparison is therefore
\begin{equation}\label{eq:three-levels}
 a_nW_k<c_nW_k<W_k.
\end{equation}
The relative gain over this particular individual bound is
$c_n/a_n-1$, independently of the volume and index. In two dimensions
it equals $2c_2-1=0.0766136154\ldots$.

There is no conflict with the optimal leading constant in the sum inequality.
Indeed, summing our individual estimates yields only
\[
 S_k\geq c_nC_nV^{-\frac{2}{n}}\sum_{j=1}^k j^{\frac{2}{n}}
 =\bigl(c_na_n+o(1)\bigr)C_nV^{-\frac{2}{n}}k^{1+\frac{2}{n}}.
\]
Since $c_n<1$, this asymptotic coefficient is smaller than the Li--Yau
coefficient $a_n$. Thus our individual result does not strengthen the
sharp leading term for sums. Nor does it improve every known estimate for
individual eigenvalues: on domains where P\'olya's inequality is known,
the coefficient one is stronger. At low indices, isoperimetric eigenvalue
bounds can also be substantially stronger.

The analytic difference is equally precise. Li--Yau uses the cap
$F_k\leq V/(2\pi)^n$, whereas \cref{prop:envelope} adds the cap
$F_k\leq V\lambda_k/((2\pi)^n|\xi|^2)$. The improvement comes from imposing
both restrictions before minimizing the moment.

\subsection{Melas and Ilyin: inertia corrections and explicit comparisons}

For bounded domains, Melas~\cite{Melas2003} adds a positive term linear in
$k$ to the sum estimate. One valid explicit version, also recorded in
Ilyin~\cite[Eq.~(1.2)]{Ilyin2010}, is
\begin{equation}\label{eq:melas-comparison}
 S_k\geq a_nkW_k+\mu_n\frac{V}{I(\Omega)}k,
 \qquad \mu_n=\frac{1}{24(n+2)},\qquad
 I(\Omega)=\inf_{a\in\R^n}\int_\Omega|x-a|^2\dd x.
\end{equation}
Consequently, the direct individual corollary is
$\lambda_k\geq a_nW_k+\mu_n V/I(\Omega)$.
Our right-hand side is at least as large precisely when
\begin{equation}\label{eq:melas-threshold}
 k\geq
 \left(\frac{\mu_n V^{1+\frac{2}{n}}}
 {(c_n-a_n)C_n I(\Omega)}\right)^{\frac{n}{2}}.
\end{equation}
This comparison concerns the displayed individual corollary, not the
full information in \cref{eq:melas-comparison}.

Ilyin~\cite{Ilyin2010} refines the Fourier minimization by imposing a
gradient constraint. After translating the domain to its centroid, the
additional condition takes the form
\[
 |\nabla F_k|\leq 2(2\pi)^{-n}\sqrt{VI(\Omega)}.
\]
It controls the slope of the density, whereas our constraint controls its
height as a function of frequency. These are distinct restrictions.
For example, Ilyin's planar estimate~\cite[Theorem 4.1]{Ilyin2010} gives
\begin{equation}\label{eq:ilyin-planar}
 \lambda_k\geq\frac{2\pi k}{V}
 +\frac{1}{24}\frac{V}{I(\Omega)}
       \left(1-\frac{1}{120k}\right)
\end{equation}
after division of the sum bound by $k$.

An explicit geometric calculation makes the planar comparison particularly
transparent. Among sets of area $V$, the disk minimizes the second moment,
so
\[
 I(\Omega)\geq\frac{V^2}{2\pi}.
\]
Indeed, for the disk $B$ centered at the origin with $|B|=V$,
integrating $(|x|^2-R^2)(\one_\Omega-\one_B)\geq0$ proves
$\int_\Omega|x|^2\dd x\geq\int_B|x|^2\dd x=V^2/(2\pi)$;
translation and minimization give the assertion.
The increment over $2\pi k/V$ in \cref{eq:ilyin-planar} is therefore
less than $\pi/(12V)$. Our increment is
$4\pi(c_2-\tfrac12)k/V$, which already exceeds $\pi/(12V)$ at $k=1$.
Thus our planar bound is stronger, for every $k$, than this direct
individual corollary of Ilyin and hence also than the displayed Melas
corollary. This conclusion does not compare the original sum inequalities
or more elaborate uses of them.

\subsection{Boundary corrections and Riesz-mean estimates}
Kova\v{r}\'ik--Vugalter--Weidl~\cite{KVW2009} obtain planar sum estimates
with a positive correction of almost the second asymptotic order under
their boundary hypotheses. Geisinger--Laptev--Weidl~\cite{GLW2011}
derive geometric improvements of Berezin estimates for Riesz means of
order $\sigma\geq3/2$, with a negative remainder of the appropriate
semiclassical order. Under additional geometric assumptions they also
obtain improved individual bounds. These results retain information about
the boundary that the volume-only coefficient $c_n$ cannot encode.

For a fair numerical comparison, a remainder must first be converted to
the same spectral quantity. If a valid sum estimate is
\[
 S_k\geq a_nkW_k+\mathcal R_\Omega(k),
\]
then its direct individual consequence beats $c_nW_k$ exactly when
\begin{equation}\label{eq:remainder-criterion}
 \mathcal R_\Omega(k)> (c_n-a_n)kW_k.
\end{equation}
For a fixed domain, a remainder of size $k^{1+1/n}$ is asymptotically
smaller than the threshold $k^{1+2/n}$ in
\cref{eq:remainder-criterion}. This observation compares direct
individual corollaries only. It does not diminish the sharper information
about sums or boundary asymptotics in those papers.

To discuss more recent estimates, define
\[
 R_1(z)=\sum_j(z-\lambda_j)_+,\qquad
 L_{1,n}^{\rm cl}=\frac{2\omega_n}{(n+2)(2\pi)^n}.
\]
Frank--Larson--Pfeiffer~\cite[Theorem 1]{FLP2025} prove, on open sets
of finite measure, an estimate of the form
\begin{equation}\label{eq:flp-comparison}
 R_1(z)\leq L_{1,n}^{\rm cl}Vz^{1+\frac{n}{2}}
 \left(1-b_n e^{-d_n\sqrt z\,V^{\frac{1}{n}}}\right),
 \qquad b_n,d_n>0.
\end{equation}
The constants depend only on dimension. Their multiplicative gain acts
on a Riesz mean and depends on the energy, unlike our fixed coefficient
for $\lambda_k$. A direct route to comparison is the elementary identity
\begin{equation}\label{eq:riesz-duality}
 S_k=\sup_{z\geq0}\{kz-R_1(z)\}.
\end{equation}
At $z=\lambda_k$ equality holds even with multiplicities; other values
give no larger expression. Thus any verified upper bound $R_1(z)\leq U(z)$
gives $\lambda_k\geq k^{-1}\sup_z\{kz-U(z)\}$.
Numerical dominance requires evaluating that bound with valid constants.
The factor in \cref{eq:flp-comparison} tends to one as $z\to\infty$,
so it should not be interpreted as an energy-independent improvement
of the leading Riesz-mean constant.

\subsection{Steinerberger: spectral gaps and two-point information}

Steinerberger~\cite[Theorem 1]{Steinerberger2024} proves for bounded
domains that
\begin{equation}\label{eq:steiner-gap}
 \lambda_k\geq a_nW_k+\frac{r}{k}(\lambda_k-\lambda_r),
 \qquad 1\leq r\leq k.
\end{equation}
His planar two-point result is
\begin{equation}\label{eq:steiner-two-point}
 2\lambda_m+\lambda_{2m}\geq\frac{10\pi m}{V}.
\end{equation}
The first estimate retains spectral-gap information; the second couples
different indices. Neither has the same content as a volume-only lower
bound at every index.

For a quantitative comparison, put
$t_{r,k}=(r/k)(1-\lambda_r/\lambda_k)$.
Then \cref{eq:steiner-gap} is equivalently
$\lambda_k\geq a_nW_k/(1-t_{r,k})$. Its right-hand side exceeds
$c_nW_k$ precisely when
\begin{equation}\label{eq:gap-comparison}
 t_{r,k}>1-\frac{a_n}{c_n}.
\end{equation}
An independently known lower bound for $t_{r,k}$ can therefore turn
the implicit estimate into a stronger explicit one.

Conversely, applying our planar estimate at both indices only yields
\[
 2\lambda_m+\lambda_{2m}\geq
 \frac{16\pi c_2m}{V}
 =\frac{8.6129089232\ldots\,\pi m}{V},
\]
which is weaker than \cref{eq:steiner-two-point}. The uniform individual
gain therefore does not subsume the two-point theorem. Nor can one
deduce a stronger lower bound for $\lambda_k$ from
\cref{eq:steiner-gap} merely by substituting a lower bound for $\lambda_r$:
solving for $\lambda_k$ gives a negative coefficient on $\lambda_r$.

\subsection{Gan--Jiang--Lin: explicit inertia terms and a P\'olya subsequence}

For bounded open sets, Gan--Jiang--Lin~\cite[Corollary 2.7]{GJL2025}
give the following implicit estimate with explicit coefficients
\begin{equation}\label{eq:gjl-comparison}
 k\leq a_n^{-\frac{n}{2}}\alpha_n\lambda_k^{\frac{n}{2}}
 \left(1-\frac{D_n}{\lambda_k}\frac{V}{I(\Omega)}\right),
 \qquad D_n=\frac{nC_1(n)}{4(n+2)},
\end{equation}
where $C_1(n)$ is specified in their Theorem 1.1.
In particular $C_1(2)=157/480$, and the planar consequence is
\begin{equation}\label{eq:gjl-planar}
 \lambda_k\geq\frac{2\pi k}{V}
       +\frac{157}{3840}\frac{V}{I(\Omega)}.
\end{equation}
They also prove that infinitely many Dirichlet eigenvalues satisfy
P\'olya's inequality when $n\geq3$. This gives coefficient one along
a subsequence, while our coefficient $c_n<1$ applies to all indices.

Using the inertia inequality proved above, the increment in
\cref{eq:gjl-planar} is at most $157\pi/(1920V)$.
Since
\[
 4(c_2-\tfrac12)=0.1532272308\ldots
 >\frac{157}{1920}=0.0817708333\ldots,
\]
our planar bound is strictly stronger than \cref{eq:gjl-planar}
for every $k\geq1$. This comparison does not apply to their full
Riesz-mean theorem or their coefficient-one subsequence result.
For $n\geq3$, \cref{eq:gjl-comparison} can instead be evaluated as an
implicit bound and compared with $c_nW_k$ for the particular domain.

\subsection{An epsilon-loss estimate with an explicit high-frequency threshold}
\label{sec:epsilon-comparison}

Jiang--Lin~\cite{JiangLin2026} prove that, for a bounded Lipschitz
domain and each $\varepsilon\in(0,1)$, there is an explicit
$\Lambda(\varepsilon,\Omega)$ such that
\[
 k\leq (1+\varepsilon)\alpha_n\lambda_k^{\frac{n}{2}}
 \quad\hbox{whenever }\lambda_k>\Lambda(\varepsilon,\Omega).
\]
In our normalization this is
\begin{equation}\label{eq:epsilon-loss}
 \lambda_k\geq(1+\varepsilon)^{-\frac{2}{n}}W_k
 \quad\hbox{above that threshold}.
\end{equation}
Their contribution includes quantitative Weyl remainders and a
specified threshold, not merely the qualitative existence of a
large-index regime.

Algebra gives an exact coefficient comparison:
\begin{equation}\label{eq:epsilon-criterion}
 (1+\varepsilon)^{-\frac{2}{n}}>c_n
 \quad\Longleftrightarrow\quad
 \varepsilon<c_n^{-\frac{n}{2}}-1=K_n-1.
\end{equation}
One must also enforce the threshold in \cref{eq:epsilon-loss}.
Once these two conditions hold, the Jiang--Lin estimate is stronger.
In dimension two, for instance, $\varepsilon=1/2$ gives coefficient
$2/3>c_2$, but only in its stated energy range. Taking $\varepsilon$
to zero at a fixed index is not justified, since the threshold
depends on $\varepsilon$.

The two bounds can be used together without a circular threshold
test. A sufficient, explicitly checkable condition for entering
that range is
\[
 c_nW_k>\Lambda(\varepsilon,\Omega),
\]
because our theorem then guarantees
$\lambda_k>\Lambda(\varepsilon,\Omega)$. Thus the universal estimate
also supplies a way to certify when the stronger high-frequency
estimate becomes applicable. The stronger sufficient test
$L_k^{\mathrm{hyb}}>\Lambda(\varepsilon,\Omega)$ is available from
\cref{thm:hybrid-root}. This use of a certified threshold is distinct
from the integral combination in \cref{sec:hybrid}, which uses their
lower counting estimate. Whether the latter exceeds both separate
bounds is determined by \cref{eq:hybrid-strict-maximum}.

\subsection{Ji--Luo: refined averaged bounds and the poly-Laplacian}
\label{sec:ji-luo-comparison}

Ji--Luo~\cite{JiLuo2026} refine lower bounds for averaged
Dirichlet eigenvalues by retaining additional positive terms in
polynomial expansions. Their Theorem 2.1 treats the Laplacian,
and Theorem 3.1 treats higher powers with clamped Dirichlet
conditions. The optimality described there concerns the polynomial
expansions, not the sharp universal coefficient for each eigenvalue.

For a transparent quantitative comparison, their
Corollary 2.4 gives
\begin{equation}\label{eq:ji-luo-corollary}
 \frac{S_k}{k}\geq a_nW_k+\eta_{n,k}\frac{V}{I(\Omega)},
 \qquad
 \eta_{n,k}=\frac{1}{n+2}
 \min\left\{2k^{\frac{2}{n}},\frac{1}{12}
       \left(\frac{14}{5}+\frac{n}{5^n}\right)\right\}.
\end{equation}
Since $k\geq1$ and $n\geq2$, the second entry in the minimum
is smaller than the first. Write the resulting coefficient
as $\eta_n$. Passing from the average to $\lambda_k$, our
bound is at least as strong as this corollary precisely when
\begin{equation}\label{eq:ji-luo-threshold}
 k\geq
 \left(\frac{\eta_n V^{1+\frac{2}{n}}}
 {(c_n-a_n)C_nI(\Omega)}\right)^{\frac{n}{2}}.
\end{equation}
For $n=2$, $\eta_2=3/50$. The previously proved inertia inequality
$I(\Omega)\geq V^2/(2\pi)$ bounds their additive term by
$3\pi/(25V)$. Since
\[
 4(c_2-\tfrac12)=0.1532272308\ldots>\frac{3}{25},
\]
our planar bound exceeds this particular individual corollary
for every $k\geq1$.

This comparison does not rank our bound against their full
Theorem 2.1. If $B^{\rm JL}_{n,k}(\Omega)$ denotes its complete
right-hand side, then its individual consequence is
$\lambda_k\geq B^{\rm JL}_{n,k}(\Omega)$; superiority requires
evaluating $B^{\rm JL}_{n,k}(\Omega)-c_nW_k$, with all its
parameters and terms retained.

Their higher-order problem is
$(-\Delta)^\ell u=\Lambda u$ with
$u=\partial_\nu u=\cdots=\partial_\nu^{\ell-1}u=0$ on the boundary.
These eigenvalues are not, in general, the powers
$\lambda_k^\ell$ of the ordinary Dirichlet Laplacian. Accordingly,
raising our bound to the power $\ell$ does not establish a
comparison with their poly-Laplacian theorem; that problem
requires its own quadratic-form argument.

\subsection{Scope of the comparison}

The preceding calculations support three precise conclusions. First,
$c_nW_k$ strictly improves the individual bound obtained by dividing
the Li--Yau sum inequality by $k$. Second, in dimension two it also
dominates the particular individual corollaries of the inertia-based
estimates displayed in
\cref{eq:ilyin-planar,eq:gjl-planar,eq:ji-luo-corollary}.
These statements do not compare the underlying
sum or Riesz-mean inequalities in their full strength. 
Third, the hybrid root satisfies $c_nW_k\leq L_k^{\mathrm{hyb}}<W_k$ by
\cref{thm:hybrid-root,cor:hybrid-weyl-upper}. It is strictly stronger than
the maximum of the volume-only baseline and a given competing lower
bound exactly when the sign condition in
\cref{eq:hybrid-strict-maximum} holds. 

None of these conclusions makes the present estimate universally
strongest. On Euclidean balls, the Dirichlet P\'olya theorem of
Filonov--Levitin--Polterovich--Sher~\cite{FLPSBalls2023} gives coefficient
one and is therefore stronger. The thin-product results of
He--Wang~\cite{HeWang2026} likewise give coefficient one under their
geometric hypotheses. Near-ball stability estimates
\cite{BLNP2026} can provide sharper fixed-index information when the
required spectral deficit is small. The Neumann ball theorems
\cite{FLPSNeumann2026,LiNeumann2026} concern a different boundary
condition and the opposite P\'olya inequality direction, so they are not
numerically comparable with the present Dirichlet lower bound.

The high-frequency comparison is similarly conditional. Jiang--Lin's
epsilon-loss coefficient exceeds $c_n$ when
\cref{eq:epsilon-criterion} holds, but only after its explicit threshold
has been verified. Spectral-gap, two-point, and subsequence results may
also retain information unavailable to a uniform one-point estimate.
For a given domain, independently valid lower bounds may always be
combined by taking their maximum.

The hybrid mechanism is different from such a maximum. It integrates a
lower counting estimate into the exact Riesz-mean identity before solving
the scalar moment inequality. This creates the possibility of a strict
gain over both separate inputs, while the sign test prevents that
possibility from being stated as automatic dominance. Thus the appropriate
summary is not that one method replaces the others, but that the frequency
envelope provides a transparent baseline and a compatible place at which
additional geometric or spectral information can enter.

\section{Conclusions and Further Directions}\label{sec:questions}

This paper develops a frequency-capacity approach to lower bounds for
individual Dirichlet eigenvalues. Energy orthogonality supplies the
frequency-dependent constraint
$F_k(\xi)\leq V\lambda_k/((2\pi)^n|\xi|^2)$, while the standard
$L^2$ Bessel estimate supplies the constant cap. Solving the resulting
radial moment problem yields
\[
 \lambda_k\geq c_nC_nV^{-2/n}k^{2/n},
 \qquad \frac{n}{n+2}<c_n<1,
\]
for every index on every open set of finite positive measure. The result
strictly improves the direct individual consequence of Li--Yau, preserves
the correct Weyl scaling, and requires no boundary regularity. It does not
improve the sharp leading coefficient in the Li--Yau sum inequality and
does not attain the conjectured P\'olya coefficient.

The second part of the paper retains information discarded by the
elementary closure $S_k\leq k\lambda_k$. Through the identity
$S_k=k\lambda_k-R_1(\lambda_k)$, a counting-function lower bound produces
a controlled spectral deficit. Truncation at $k-1$ makes the corresponding
root equation globally well posed: its residual is continuous and strictly
decreasing, and its unique positive root is a valid eigenvalue lower bound.
Corollary~\ref{cor:hybrid-weyl-upper} further shows that this root
lies strictly below $W_k$, yielding the explicit search interval
$[L_k^{(0)},W_k]$. 
The root is monotone in the counting input, and the sign criteria in
\cref{thm:hybrid-root} determine exactly when it improves the baseline or
the maximum of independently available bounds. Jiang--Lin's quantitative
counting estimate supplies one explicit implementation on bounded
Lipschitz domains.

The comparison section also identifies the limits of these conclusions.
The universal coefficient improves specific individual corollaries, not
the full content of sharper sum, Riesz-mean, gap, or special-domain
theorems. Likewise, a positive numerical hybrid gain certifies improvement
for the stated inputs, but does not imply dominance over every applicable
geometric estimate. Keeping these distinctions explicit is essential when
the framework is used with new spectral data.

Several extensions are natural. The zero-mean refinement in
Remark~\ref{rem:zero-mean-refinement} produces a smaller, generally nonradial
Fourier capacity and therefore points to a stronger moment problem.
Additional energy levels, Fourier-gradient constraints, torsional or
heat-content information, and certified low-energy spectral data could
provide further compatible restrictions. Within the hybrid construction, 
sharper continuous counting lower bounds can increase the root
monotonically, but it remains strictly below $W_k$ as long as the moment
function $\mathcal M_k$ is unchanged. Reaching the P\'olya bound through
this approach therefore requires a stronger moment constraint or a
different spectral closure. Any progress toward P\'olya's conjecture must ultimately
control these refinements uniformly at every index; a strict improvement
of a coefficient that remains below one is not, by itself, sufficient.

\appendix
\section{Numerical Illustration of the Hybrid Comparison}\label{sec:disk-example}

This example illustrates the strict comparison criterion in
\cref{thm:hybrid-root}. The disk provides explicit geometric constants,
but its role here is computational: sharper spectral bounds are already
known for this domain. Let $\Omega=B_1$
in $\R^2$, so $V=\pi$, $\alpha_2=1/4$, and its minimal containing
rectangle is $(-1,1)^2$. Put
\[
 C=9/4,\qquad r_D=3-2\sqrt2,\qquad
 G(t)=\frac\pi2+20\pi\log_2\frac{20\sqrt t}{\pi}.
\]
Here $r_D$ is the inradius of the complement in that rectangle.
The value $C=9/4$ is a valid, deliberately nonoptimal constant for the
two-sided boundary-layer estimate on the disk for thickness less than
its width $2$: the tubular area is $4\pi s$ for $s\leq1$ and
$\pi(1+s)^2$ for $1<s<2$.

Using \cref{eq:hybrid-jl-input} with $d=\pi/(2\sqrt t)$, define
\begin{equation}\label{eq:hybrid-disk-lower}
 \nu(t)=
 \begin{cases}
 0,&0<t\leq\pi^2/4,\\
 [t/4-(C/2)\sqrt t\,G(t)]_+,&t>\pi^2/4.
 \end{cases}
\end{equation}
This piecewise definition ensures that the geometric cutoff is always
admissible where the formula is used. With the same $C$, the upper side
of \cite[Theorem 1.5]{JiangLin2026} gives the necessary inequality
$k\leq U(\lambda_k)$, where, in the energy range used below,
\begin{equation}\label{eq:hybrid-disk-upper}
 U(t)=\frac t4+\frac{\sqrt t}{4\pi}
 \left[C(8+2\pi)
 \left(\frac{\pi r_D}{2}+20\pi\log_2\frac{20\sqrt t}{\pi}\right)
 -\frac43\right].
\end{equation}
The function $U$ is strictly increasing for $t\geq\pi^2/4$; moreover,
domain monotonicity gives $\lambda_k\geq\lambda_1((-1,1)^2)>\pi^2/4$.
Thus its large positive solution $U(L_k^{\mathrm{JL}})=k$ defines the
individual lower bound being compared here. This is the general
Lipschitz estimate with the displayed constants, not an optimization
over all convex-domain improvements in that paper.

The integral of \cref{eq:hybrid-disk-lower} can be evaluated without
frequency quadrature. Write $b=C/2$, $e=10\pi/\log2$, and
$d_0=\pi/2+20\pi\log_2(20/\pi)$. An antiderivative of its untruncated
positive branch is
\[
 A(t)=\frac{t^2}{8}-bt^{3/2}
 \left[\frac23(d_0+e\log t)-\frac{4e}{9}\right].
\]
Let $t_0$ be the zero where this branch becomes positive and let $t_c$
solve $\nu(t_c)=k-1$. For $k>1$,
\[
 J_k(L)=
 \begin{cases}
 0,&L\leq t_0,\\
 A(\min\{L,t_c\})-A(t_0)+(k-1)(L-t_c)_+,&L>t_0.
 \end{cases}
\]
At $k=70\,000\,000$, ordinary double-precision evaluation gives the
following rounded root values:
\begin{center}
\begin{tabular}{@{}lr@{}}
\toprule
Estimate & Approximate value\\
\midrule
$L_k^{(0)}$ & $1.50725906\times10^8$\\
$L_k^{\mathrm{JL}}$ from \cref{eq:hybrid-disk-upper}
 & $1.51398861\times10^8$\\
$L_k^{\mathrm{hyb}}$ & $1.74556230\times10^8$\\
\bottomrule
\end{tabular}
\end{center}
At
$L_*:=\max\{L_k^{(0)},L_k^{\mathrm{JL}}\}=L_k^{\mathrm{JL}}$,
the normalized residual $\Phi_k(L_*)/(kL_*)$ is approximately $0.13447$,
and the hybrid root exceeds the larger separate value by approximately
$15.30\%$. These decimal calculations are exploratory, not
interval-arithmetic certificates. In particular, they are not used to prove
\cref{thm:hybrid-root}. On the disk the known P\'olya estimate is
$\lambda_k\geq4k=2.8\times10^8$, which is stronger than all three
displayed values. The example only illustrates the interaction of the
specified two inputs, not spectral novelty on disks or dominance over
all known bounds.

\end{document}